\documentclass[11pt]{article}
\usepackage{latexsym,amsfonts,amssymb,amsmath,euscript,amsthm}
\usepackage[applemac]{inputenc}  
\usepackage[english,francais]{babel}  
\usepackage{graphicx} 
\usepackage{subfigure}
\usepackage{hyperref}         
\usepackage{color,fancybox}
\usepackage{float}

\setlength{\topmargin}{-0.2in}
\setlength{\textwidth}{6.5in}
\setlength{\textheight}{8.5in} 
\setlength{\oddsidemargin}{+0.1cm}
\setlength{\evensidemargin}{+0.1cm}

\newcommand{\R} {\mathbb{R}}
\newcommand{\N} {\mathbb{N}}

\newcommand{\eps}{\epsilon}


\title{Unfolding operator method for thin domains with a locally periodic highly oscillatory boundary\footnote{
This research has been partially supported by grant MTM2012-31298, MINECO, Spain and Grupo de Investigaci\'on CADEDIF, UCM. The second author, Manuel Villanueva-Pesqueira, also partially supported by a FPU fellowship (AP2010-0786) from the Goverment of Spain.}   }
\author{Jos\'{e} M. Arrieta\footnote{Departamento de Matem\'atica Aplicada, Universidad Complutense de Madrid, 28040 Madrid and Instituto de Ciencias Matem\'aticas
CSIC-UAM-UC3M-UCM, C/Nicol\'as Cabrera 13-15, Cantoblanco, 28049 Madrid, Spain}
and Manuel Villanueva-Pesqueira\footnote{Departamento de Matem\'atica Aplicada, Universidad Complutense de Madrid, 28040 Madrid, Spain} 
}

\date{ }

\begin{document}

\maketitle
%
%
%
%

{\footnotesize 
\par\noindent {\bf Abstract:}
We analyze the behavior of solutions of the Poisson equation with homogeneous Neumann boundary conditions in a two-dimensional
thin domain which presents locally periodic oscillations at the boundary.  The oscillations are such that both the amplitude and period of the oscillations may vary in space.  We obtain the homogenized limit problem 
and a corrector result by extending the unfolding operator method to the case of locally periodic media. We emphasize the fact that the techniques developed in this paper can be adapted to other locally periodic  cases like reticulated or perforated domains where the period may be space-dependent. \vskip 0.5\baselineskip

%

\vspace{11pt}

\noindent
{\bf Keywords:}
Thin domain; oscillatory boundary; homogenization; unfolding method; locally periodic; varying period

\vspace{6pt}
\noindent
{\bf 2000 Mathematics Subject Classification:}  35B27, 74K10, 74Q05

}

\numberwithin{equation}{section}
\newtheorem{theorem}{Theorem}[section]
\newtheorem{lemma}[theorem]{Lemma}
\newtheorem{corollary}[theorem]{Corollary}
\newtheorem{proposition}[theorem]{Proposition}
\newtheorem{definition}[theorem]{Definition}
\newtheorem{remark}[theorem]{Remark}
\allowdisplaybreaks

\section{Introduction}
\selectlanguage{english}
In this paper we study the behavior of the solutions of  the Neumann problem for the Laplace operator
\begin{equation} \label{OP0}
\left\{
\begin{gathered}
- \Delta u^\epsilon + u^\epsilon = f^\epsilon
\quad \textrm{ in } R^\epsilon \\
\frac{\partial u^\epsilon}{\partial \nu ^\epsilon} = 0
\quad \textrm{ on } \partial R^\epsilon
\end{gathered}
\right. 
\end{equation}
where $f^\epsilon \in L^2(R^\epsilon)$,  $\nu^\epsilon = (\nu^\epsilon_1, \nu^\epsilon_2)$ is the unit outward normal to $\partial R^\epsilon$
and $\frac{\partial }{\partial \nu^\epsilon}$ is the outside normal derivative. 
The domain $R^\eps$ is a two dimensional thin domain which presents a highly oscillatory behavior at the top boundary,
 given by
 \begin{equation}\label{Def-thindomain}
  R^\epsilon = \{ (x, y) \in \R^2 \; | \;  x \in (0,1), \, 0 < y < \epsilon \, G(x, x/\eps) \}
  \end{equation}
where the smooth function $G$, defined as 
\begin{equation*}
\begin{array}{rl}
G:(0,1)\times \R &\longrightarrow (0,+\infty) \\
 (x,y)&\longrightarrow G(x,y)
 \end{array}
 \end{equation*}
 satisfies that  there exist two positive constants $G_0$, $G_1$ with 
\begin{equation*}
\begin{gathered}
0< G_0 \le G(x,y) \le G_1, \quad  \forall (x,y)\in (0,1)\times \R.
\end{gathered}
\end{equation*}  
Moreover,  for each $x\in (0,1)$, the function   $G(x, \cdot)$ is $l(x)$-periodic, with the function $l(\cdot)$ not being necessarily constant. This includes the case where the thin domain is locally periodic with constant period, for instance,
$G(x, x/\eps) = a(x) + b(x)g(x/\eps)$ where $a,b: (0,1) \to \R$ are $\mathcal{C}^1$ functions and $g: \R \to \R$ is an L-periodic smooth function (see for instance \cite{ArrPer2011}).  But, it also includes the very important and relevant case where the period changes as we vary $x$. For instance, $G(x, x/\eps) = a(x) + b(x)g(l(x)x/\eps)$ where $l: \R \to \R$ is certain smooth function.  

\begin{figure}
  \centering{\includegraphics [width=8cm, height=2.5cm]{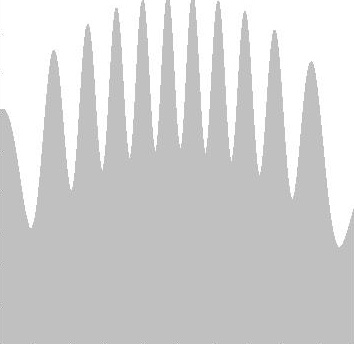}
    \caption{Model thin domain $R^\eps$}}
    \label{thin1}
\end{figure}

 The main novelty in this work is that we consider domains where both the amplitude and frequency of the oscillations depend on $x$ (see Figure 1). In this respect we are deviating from the purely periodic case, which is the most common setting in homogenization theory and we are interested in analyzing how the geometry of the domain,
 the varying amplitude and period of the function $G$, affects the homogenized limit problem. 

\par

The purely periodic case can be addressed by somehow standard techniques in homogenization theory, as developed 
in \cite{BenLioPap, CioPau, SP, CiorDonato}. If $G_\eps(x)=G(x/\eps)$ where $G$ is an $L$-periodic $C^1$ function and if we denote by
$$
Y^* = \{ (y_1,y_2) \in \R^2 \; : \; 0< y_1 < L, \quad 0 < y_2 < G(y_1) \}
$$
then the limit equation is shown to be 
\begin{equation} \label{GLP}
\left\{
\begin{gathered}
-q_0w_{xx} + w =  f(x), \quad x \in (0,1)\\
w'(0)=w'(1)=0
\end{gathered}
\right.
\end{equation} 
where 
$$
\begin{gathered}
q_0=\frac{1}{|Y^*|} \int_{Y^*} \Big\{ 1 - \frac{\partial X}{\partial y_1}(y_1,y_2) \Big\} dy_1 dy_2, \qquad 
\end{gathered}
$$ 
and $X$ is the unique solution of certain PDE problem posed in the basic cell $Y^*$. 
We refer to \cite{MP2, ArrPer2011,ArrCarPerSil} for details.  

Also, the case where the function $G(x, \cdot)$ is $L$-periodic with $L$ independent of $x$, that is, locally periodic case with fixed period,  was analyzed in \cite{ArrPer2011}. With the method of oscillatory test functions applied first to the case of piecewise periodic case and then with a domain perturbation argument, the following limit problem was obtained:

\begin{equation} \label{GLP1}
\left\{
\begin{gathered}
-\frac{1}{p(x)}(q(x)w_{x})_x + w =  f(x), \quad x \in (0,1)\\
w'(0)=w'(1)=0
\end{gathered}
\right.
\end{equation} 
where 
\begin{equation}\label{GLP2}
\begin{gathered}
q(x)= \int_{Y^*(x)} \Big\{ 1 - \frac{\partial X(x)}{\partial y_1}(y_1,y_2) \Big\} dy_1 dy_2, \qquad 
\end{gathered}
\end{equation}
\begin{equation}\label{GLP3}
p(x)=|Y^*(x)|
\end{equation}
and $X(x)(\cdot,\cdot)$ is the unique solution of an appropriate PDE problem posed in the basic cell $Y^*(x)$ which depends on the variable $x$ and 
it is given by 
$$
Y^*(x)= \{ (y_1,y_2) \in \R^2 \; : \; 0< y_1 < L, \quad 0 < y_2 < G(x,y_1) \}.
$$
Notice that if we assume that $Y^*(x)\equiv Y^*$, then we recover the homogenized problem in the purely periodic case. 
 
The analysis performed in \cite{ArrPer2011} used in a very definite way that the period of the function $G(x,\cdot)$ is independent of $x$ and the techniques do not apply in a very straightforward way to the more general case of a varying period. 


%
 
In the literature there are several, although not many, works on homegenization in locally periodic structures. In \cite{CP, MN, NM} an asymptotic expansion technique was used  to obtain the limit problem and the estimates of the rate of the convergence for problems defined in domains with locally periodic perforations,  i.e. the geometry of the cavities varies with space. Two scale convergence was applied in \cite{CMT,MPO}  to homogenize the warping, the torsion and the Neumann problems in two dimensional domains with  smoothy changing holes and in \cite{MA} two scale convergence was generalized to a locally periodic and fibrous media. All these works deal with the case where the basic cell varies with space but the period in which the basic cell is spaced is constant and do not vary with space. 

Let us also point out that there are several papers addressing the problem of studying the effect of rough boundaries on the behavior of the solution of partial differential equations.  Among others, we can mention \cite{Ac, Br, C1, Ge, J, komo} in the context of fluid flows and \cite{Ch1, Ch2} where complete asymptotic expansions of the solutions were studied.

When dealing with a thin domain $R^\eps$ as defined in \eqref{Def-thindomain},  where the function $G(x,\cdot)$ is periodic of period $l(x)$, we may distinguish two different situations.  

On one hand, if the function $h(x)=\frac{x}{l(x)}$ satisfies  $h'(x)>0$ for all $x\in [0,1]$ then 
$h:(0,1)\to \Big(0, \frac{1}{l(1)}\Big)$ is a diffeomorphism. In this particular case, if seems reasonable to perform the following change of variables 
\begin{equation*}
\begin{array}{rl}
Z^\eps: R^\eps &\longrightarrow R_1^\eps \\
 (x,y)&\longrightarrow (x_1, x_2) := \big(h(x), y\big) =\Big(\frac{x}{l(x)}, y\Big).
 \end{array}
 \end{equation*}
which transforms the thin domain $R^\eps$ into another thin domain, $R_1^\eps$, having an oscillatory boundary given by the $1-$periodic function $H(x,y)= G(h^{-1}(x), l(h^{-1}(x)) y)$, that is 
$$R_1^\epsilon = \Big\{ (x_1,x_2) \in \R^2 \; | \;  x_1 \in (0,1/l(1)),  \; 0 < x_2 < \epsilon \, H(x_1,x_1/\eps) \Big\}.$$ 

In this new system of coordinates, problem \eqref{OP0} is transformed into 
\begin{equation}\label{transformed-problem}
\left\{
\begin{gathered}
- h'(h^{-1}(x_1))\hbox{div}\big(B(v^\eps)\big)  + v^\epsilon = f^\epsilon
\quad \textrm{ in } R_1^\epsilon, \\
B(v^\eps)\cdot \eta  = 0
\quad \textrm{ on } \partial R_1^\epsilon,\\
v^\eps=u^\eps \circ (Z^\eps)^{-1} \textrm{ in } R_1^\epsilon
\end{gathered}
\right. 
\end{equation}
where $\eta$ denotes the unit outward normal vector field to $\partial R_1^\epsilon$ and 
\begin{equation*}
B(v) = \Big( h'(h^{-1}(x_1))\frac{ \partial v}{\partial x_1}, \frac{1}{ h'(h^{-1}(x_1))}\frac{ \partial v}{\partial x_2}\Big).
\end{equation*}
Under this change of variables, we have transformed the thin domain with variable amplitude and period of the oscillations into a locally periodic thin domain with constant period, although the amplitude continues to vary with space (that is, the function $H(x,y)$ actually depends on $x$). Moreover, in this case, the transformed differential operator, see \eqref{transformed-problem},  is now with non-constant coefficients.  Now, we may proceed using the techniques from \cite{ArrPer2011} or try to use the unfolding operator method adapted to the situation of variable amplitude, which is a particular case of what we are developing in this paper. 
\par\medskip

On the other hand, if the function $h(x)=\frac{x}{l(x)}$  is not a diffeomorphism, we cannot perform this changes of variables.  This is the situation that we are considering in this paper. As a matter of fact we will assume the following hypothesis 
\par\medskip
\begin{itemize}
\item[\bf{(H)}] The function $l(\cdot)$ is $\mathcal{C}^1$ and there exist  positive constants $l_0, l_1$ such that  $0<l_0 \leqslant l(x) < l_1$. Moreover, for all $k \in \R$ the points $x \in (0,1)$ such that $x=kl(x)$ is a finite set and if $A= \{x\in (0,1): xl'(x) = l(x)\}$, then $\mu\{A\}=0$, 
where $\mu$ denotes the Lebesgue measure.
\end{itemize}
\par\medskip

This hypothesis contemplates the possibility that $h$ has a finite number of maxima and minima (see for instance Figure 2) or even the more degenerate situation where the function $h(x)$ has an infinite number of critical points (see Figure 3). For both cases, it does not seem possible to perform a change of variables that transform the problem in a domain with fixed period. 

\begin{figure}[h!]
\centering 
\subfigure[Function $h(x)=\frac{x}{l(x)}$]{\includegraphics[width=5cm, height=4cm]{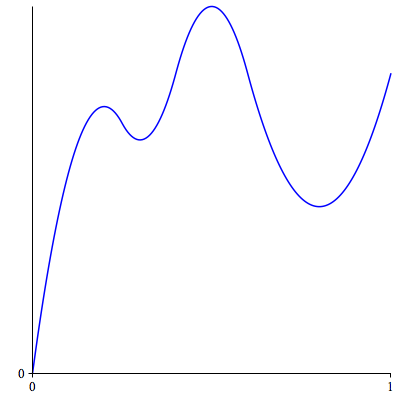}} \subfigure[Corresponding thin domain]{\includegraphics[width=8cm, height=3.5cm]{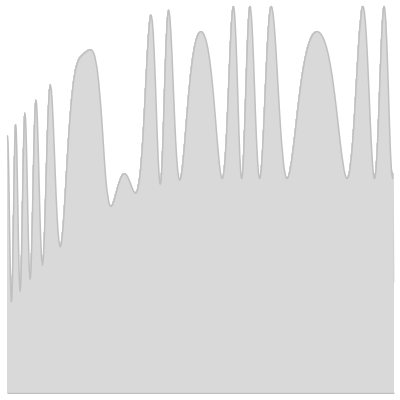}}
 \caption{$A$ is a finite set.}
\end{figure}

\begin{figure}[h!]
\centering 
\subfigure[Function $h(x)=\frac{x}{l(x)}$]{\includegraphics[width=5cm, height=4cm]{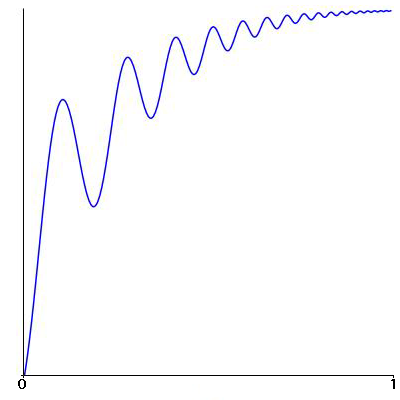}} \subfigure[Corresponding thin domain]{\includegraphics[width=8cm, height=3cm]{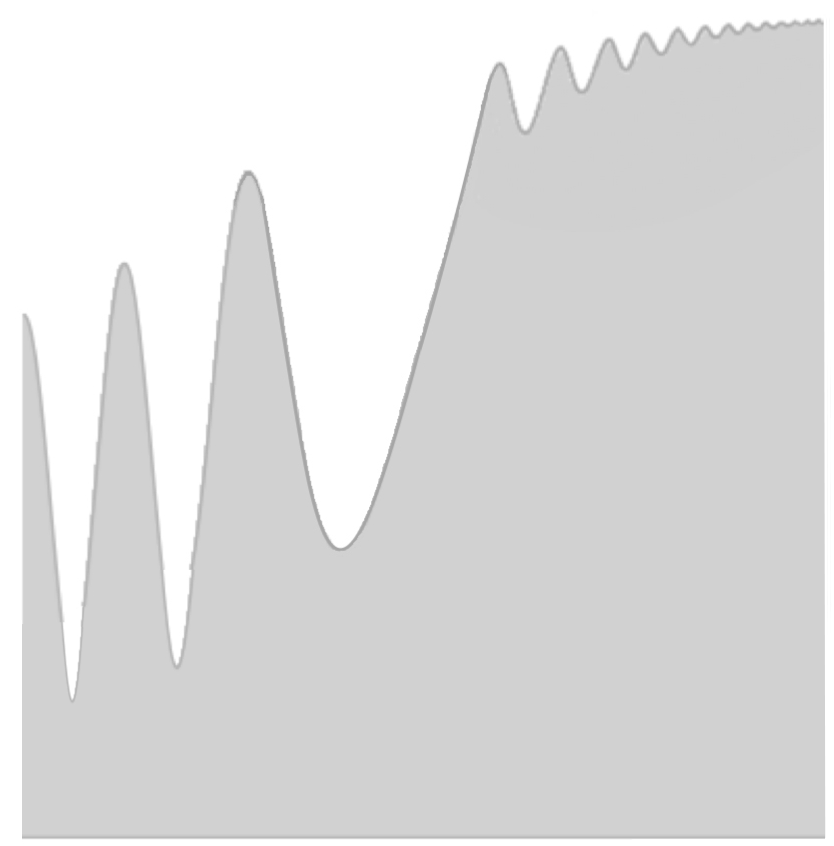}}
 \caption{$A$ is a infinite numerable set.}
\end{figure}

Our proposal consist in avoiding to make a change of variables and rather adapt the Unfolding  Operator method, which was initially devised to tackle purely periodic problems,  to this new ``locally periodic'' situation. We refer to   \cite{ArbDouHor,Cas} for the first descriptions of the unfolding operator method, \cite{CDG1,CDG2} for a systematic treatment of this method and \cite{DP} for an application of the method to a 2-dimensional domain with oscillating boundaries (actually \cite{DP} is the first time where the unfolding operator method is applied to a domain with an oscillatory boundary).

\par \medskip 

Let us mention that this adjustment of the method applies also to the case where the period does not depend on the spatial variable $x$ and we may consider this as a different method to obtain the results from \cite{ArrPer2011}. 
Moreover, the possibility to apply the unfolding operator method to a non-periodic situation express the robustness and power of the method.

Our  results, which were announced in \cite{ArrVil2014b}  for the simpler and more intuitive case where $h'(\cdot)>0$, will allow us to obtain the homogenized limit problem, together with a corrector result, for problems defined on thin domains with locally-periodic oscillatory boundary. This extension is performed for the case of thin domains, but the ideas and techniques can be applied to other situations like perforated domains where both hole and periodicity of the cell vary smoothly.   We will deal with this more general situation in a future publication.  As a matter of fact,
the limit problem we obtain is: 

$$
\left\{
\begin{gathered}
- \frac{l(x)}{|Y^*(x)|}\big(r(x) u_x\big)_x + u = f, \quad x \in (0,1) \\
 u'(0) = u' (1) = 0
\end{gathered}
\right.
$$
with $$r(x) =  \frac{1}{l(x)}\int_{Y^*(x)}\Big\{ 1 - \frac{\partial X(x)}{\partial y_1}(y_1,y_2) \Big\} dy_1 dy_2$$
where $Y^*(x)$ is the basic cell given by
$$
Y^*(x)= \{ (y_1,y_2) \in \R^2 \; : \; 0< y_1 < l(x), \quad 0 < y_2 < G(x,y_1) \},
$$
and $X(x)(\cdot,\cdot)$  is the unique solution of an appropriate PDE problem posed in the basic cell $Y^*(x)$.  Notice that if $l(x)$ is a constant independent of $x\in (0,1)$ we recover \eqref{GLP1}, \eqref{GLP2}, \eqref{GLP3}.


 
 
 The paper is organized as follow.   In Section \ref{Sec-notation}, we fix some notation that will be used throughout the paper. In \ref{Sec-PRE} we construct the unfolding operator  for locally periodic thin domains and show some important properties.   In Section \ref{Sec-Convergence}, we present some convergence results related to the unfolding operator. The essential result for the work is the compactness Theorem \ref{convergence prop}. 
 In Section \ref{Sec-Neumann}, we apply the previous results to obtain the homogenized limit problem. The Theorem \ref{limit problem}  shows two equivalent formulations
 of the homogenized limit problem. 
 Finally, in Section \ref{Sec-corrector}, we introduce an averaging operator $\mathcal{U}_\eps$ , the adjoint of the unfolding operator, and we prove its main properties. To end the paper, we use the averaging operator to obtain a corrector result.

\par\medskip\noindent {\bf Acknowledgment.} The authors are grateful to G. Griso, A. Damlamian and P. Donato for helpful comments and suggestions.

\section{Some notation}
\label{Sec-notation}
 Before embarking into the statements and proofs of the results let us clarify some notation that we will use throughout the article.


\par\medskip \noindent i) Observe that properly speaking there is not a basic cell associated to the domain $R^\eps$, since the periodicity properties vary from point to point in $x\in (0,1)$. Nevertheless, we will refer to the set $Y^*(x)$ defined by 
\begin{equation} \label{CELLL}
Y^*(x) = \{ (y_1,y_2) \in \R^2 \; : \; 0< y_1 < l(x), \quad 0 < y_2 < G(x,y_1) \}.
\end{equation}
as the basic cell at $x$. Notice that all these ``basic cells'' satisfy 
 $Y^*(x)\subset Y^*$ where $Y^*=(0,l_1)\times (0,G_1)$. 
\par\medskip\noindent ii)  The basic idea of the unfolding operator method in a purely periodic setting is to transform functions defined in $R^\eps$ into functions defined in a fixed domain.  With analogy to this case, we will consider the domain 
$$W = \{ (x, y_1, y_2) \in \R^3 \, : \; x \in (0, 1), \; (y_1, y_2) \in Y^*(x)\}$$ and $\hat W= \{ (x, y_1, y_2) \in \R^3 \, : \; x \in (0, 1), \; y_1\in \R$ and $0<y_2<G(x,y_1)\}$. Notice that $W=\{(x,y_1,y_2)\in \hat W:  0<y_1<l(x)\}$ and $W \subset (0,1) \times Y^*$.
Moreover, $W_{0} = \{(x,y_1) \in \R^2 : \; x \in (0, 1), \; y_1 \in (0, l(x)\}.$ Observe that this set is the bottom boundary of $W$.

\par\medskip\noindent iii)  Given an interval $(a, b) \subset \R$ we denote its characteristic function by $\chi_{(a, b)}$. $\chi^{\eps}$ is the characteristic function of $R^\eps$ and $\chi$ the characteristic function of $W$. Moreover, as it is usually done,  $\, \widetilde{}\;$ is the standard extension by zero operator.

\par\medskip\noindent iv) We will use the subindex $\#$ to denote periodicity with respect to the $y_1$ variable in the following sense.  For every fixed $x \in (0, 1)$,  the space $C^k_{\#}\big(Y^*(x)\big)$ consists of all functions $\varphi$ which are obtained as restrictions to $Y^*(x)$ of functions in $C^k(\mathbb{R}^2)$ which are  $l(x)$-periodic in the first variable. That is
$$C^k_{\#}\big(Y^*(x)\big)=\{ \varphi_{| Y^*(x)}:  \varphi\in C^k(\mathbb{R}^2), \, \varphi(y_1+l(x),y_2)=\varphi(y_1,y_2), \quad \forall (y_1,y_2)\in \mathbb{R}^2\}.$$
This is a Banach space with the usual norm $\|\cdot\|_{C^k(Y^*(x))}$.  

Also,  $W^{1,p}_{\#}\big(Y^*(x)\big)$ is the completion for the norm of $W^{1,p}\big(Y^*(x)\big)$ of  $C^{\infty}_{\#}\big(Y^*(x)\big)$.  Moreover, the space $C^\infty_\#(W)$ is the space of functions which are restrictions to $W$ of functions $\varphi\in C^\infty(\mathbb{R}^3)$ which for fixed $x\in (0,1)$ they are periodic in the $y_1$ variable of period $l(x)$, that is:  $\varphi(x,y_1+l(x),y_2)=\varphi(x,y_1,y_2)$. 

\par\medskip\noindent v) 
We define the spaces of Banach space-valued functions $L^p\Big((0,1); W^{1,q}_\#\big(Y^*(x)\big)\Big)$, \\
$L^p\Big((0,1); C^k_{\#}\big({Y^*(x)}\big)\Big)$  and $W^{1,p}\Big((0,1); C^k_{\#}\big({Y^*(x)}\big)\Big)$ in the following way: 

\begin{itemize}
\item[(1)] The space $L^p\Big((0,1); W^{1,q}_{\#}\big(Y^*(x)\big)\Big)$ consists of the measurable functions $\varphi: W \to \R$ such that $\varphi(x) \in W^{1,q}_{\#}\big(Y^*(x)\big)$ a.e. $x\in (0,1)$, with
\begin{equation*}
\|\varphi\|_{L^p\Big((0,1); W^{1,q}_{\#}\big(Y^*(x)\big)\Big)}:=     
\left\{
\begin{gathered}
\Big(\int_{0}^{1} \|\varphi(x)\|^p_{W^{1,q}_{\#}\big(Y^*(x)\big)}\; dx\Big)^{1/p} < \infty,  \hbox{ for } 1\leq p < \infty\\
\operatorname*{ess \,sup}_{x \in (0,1)}\|\varphi(x)\|_{W^{1,q}_{\#}\big(Y^*(x)\big)}< \infty,  \hspace*{0.3cm} \hbox{ for }  p = \infty.
\end{gathered}
\right.
\end{equation*}

Notice that $L^2\Big((0,1); H^1_{\#}\big(Y^*(x)\big)\Big)$ actually coincides with the space of functions
$\varphi \in L^2(W)$ such that $\frac{\partial \varphi}{\partial y_1}$, $\frac{\partial \varphi}{\partial y_2}$ belong to $L^2(W)$ and 
$\varphi(x, \cdot, \cdot)$ is $l(x)$-periodic in the first variable $y_1$.
\item[(2)] The space $L^p\Big((0,1); C^k_{\#}\big ({Y^*(x)}\big)\Big)$ comprises all strongly measurable functions $\varphi: W \to \R$  such that $\varphi(x) \in C^k_{\#}\big(\; {Y^*(x)}\; \big)$ and 
\begin{equation*}
\|\varphi\|_{L^p\Big((0,1); C^k_{\#}\big (\, {Y^*(x)}\, \big)\Big)}:=     
\left\{
\begin{gathered}
\Big(\int_{0}^{1} \|\varphi(x)\|^p_{C^k_{\#}\big ({Y^*(x)}\big)}\; dx\Big)^{1/p} < \infty,  \hbox{ for } 1\leq p < \infty\\
\operatorname*{ess \,sup}_{x \in (0,1)}\|\varphi(x)\|_{C^k_{\#}\big ({Y^*(x)}\big)}< \infty,  \hspace*{0.3cm}  \hbox{ for }  p = \infty.
\end{gathered}
\right.
\end{equation*}

\item[(3)]  The space $W^{1,p}\Big((0,1); C^k_{\#}\big( \, {Y^*(x)}\, \big)\Big)$ consists of all functions $\varphi \in L^p\Big((0,1); C^k_{\#}\big(\, {Y^*(x)}\,\big)\Big)$ such that $\partial_x\varphi$ exists in the weak sense and belongs to $L^p\Big((0,1); C^k_{\#}\big(\,{Y^*(x)}\,\big)\Big)$. Furthermore,
{\small
\begin{equation*}
\|\varphi\|_{W^{1,p}\Big((0,1); C^k_{\#}\big (\, {Y^*(x)}\,\big)\Big)}:=     
\left\{
\begin{gathered}
\Big(\int_{0}^{1} \|\varphi(x)\|^p_{C^k_{\#}\big ({Y^*(x)}\big)} +  \| \partial_x\varphi(x)\|^p_{C^k_{\#}\big ({Y^*(x)}\big)}\; dx\Big)^{1/p} (1\leq p < \infty)\\
\operatorname*{ess \,sup}_{x \in (0,1)}\Big(\|\varphi(x)\|_{C^k_{\#}\big ({Y^*(x)}\big)} +  \| \partial_x\varphi(x)\|_{C^k_{\#}\big ({Y^*(x)}\big)}\Big)\, (p = \infty).
\end{gathered}
\right.
\end{equation*}}
We usually write $H^1\Big((0,1); C^k_{\#}\big( \,{Y^*(x)}\, \big)\Big)=W^{1,2}\Big((0,1); C^k_{\#}\big( \,{Y^*(x)}\, \big)\Big)$.
\end{itemize}

\section{The unfolding operator in a  domain with locally-periodic oscillatory boundary } \label{Sec-PRE}

In this section we construct the unfolding operator for the locally periodic case and show some basic properties.  
Due to the lost of periodicity, 
a delicate point in this construction is to define an appropriate partition of the limit segment  $I=(0,1)$ which will be in accordance with the oscillatory behavior of the thin domain 
(\ref{Def-thindomain}). We will start defining the concept of ``admissible partition'' of the interval $(0,1)$. 
%
\begin{definition}\label{partition0}
An admissible partition is given by the family of ordered numbers $\{x_k^\eps\}_{k=0}^{N_\eps+1}$ for all $0<\eps\leq \eps_0$, satisfying 
$$0=x_0^\eps<x_1^\eps<\ldots<x_{N_\eps}^\eps<x_{N_\eps+1}^\eps=1$$
Moreover, for almost every point $x\in (0,1)$ there exist $0<\eps_1\leq \eps_0$ and a constant $C=C(x)$ such that for all $ 0<\eps<\eps_1\leq \eps_0$ there is a point $x_k^\eps$ of the partition which satisfies $|x- x_k^\eps|\leq  C\eps$.

We will refer to the partition as $\{x_k^\eps\}$.
\end{definition}

Some of the results below will be proved for a general admissible partition $\{x_k^\eps\}$.

We consider now a general admissible partition $\{x_k^\eps\}$.  For every $x \in (0,1)$ there exists a unique element of the partition, $x_k^\eps$, such that $x \in [x_{k}^\eps, x_{k+1}^\eps)$. By the analogy to the periodic case, we denote this point $x_k^\eps$ 
by $[x]_\eps.$ In addition, since the partition is not equally spaced we consider  for every $x \in (x_{k}^\eps, x_{k+1}^\eps)$ the factor $\Gamma_{[x]_\eps}$ given by
$$\Gamma_{[x]_\eps} := \frac{x_{k+1}^\eps - x_{k}^\eps}{l(x_{k}^\eps)}.$$ 
Then, for each $x \in (0, 1)$ there is a unique point in $y_1\in \big(0, l([x]_\eps)\big)$ such that
$$ x =  [x]_\eps +\Gamma_{[x]_\eps} y_1.$$

We are in a position to define the Unfolding Operator in our setting. 

\begin{definition}\label{unfolding}
Let $\{x_k^\eps\}$ be a general admissible partition as in Definition \ref{partition0}.  Let $\varphi$ be a  Lebesgue-measurable function in $R^{\eps}$. We define the unfolding operator $\mathcal{T_\eps}$ associated to the partition $\{x_k^\eps\}$,  acting on $\varphi$, as the function $\mathcal{T}_\eps(\varphi)$ defined in $(0,1)\times Y^*$ as:
\begin{equation*}
\mathcal{T}_\eps(\varphi) (x, y_1, y_2)=
\left\{
\begin{gathered}
 \widetilde{\varphi} \Big( [x]_\eps +\Gamma_{[x]_\eps} y_1, \eps y_2\Big) \quad \hbox{for} \quad y_1 \in \big(0, l([x]_\eps)\big),\\
 0     \hspace*{3.6cm} \hbox{for}  \quad y_1 \in \big(l([x]_\eps), l_1\big).
\end{gathered}
 \right.
\end{equation*}
\end{definition}

As in classical periodic homogenization, the unfolding operator reflects two scales : the ``macroscopic'' scale $x$ gives the position in the interval $(0,1)$ 
and the ``microscopic'' scale $(y_1, y_2)$ gives the position in the cell $Y^*$. However, due to the locally periodic oscillations of the domain $R^\eps$, 
the definition given here differs from the introduced in periodic cases. In this case, we do not have a fixed cell that describes the domain
 $R^\eps$, therefore, in the definition we use the rectangle $Y^*= (0,l_1) \times (0,G_1)$, the extension by zero and the factors $\Gamma_{[\cdot]_\eps}$ to cover $R^\eps$ and to reflect the oscillations and the variable period. As 
 a consequence we remark that there exist two crucial differences:
 
 \begin{enumerate}
 \item The support of the functions $\mathcal{T}_\eps(\varphi)$ depends on $\eps$. (See Figure \ref{domain3}). As a matter of fact for a general $\varphi$ the support of the function $\mathcal{T}_\eps(\varphi)$ is given by
\begin{eqnarray}\label{W-eps}
W^\eps=\Big\{(x,y_1,y_2): x\in (0,1), 0< y_1< l([x]_\eps), 0<y_2< G \Big(  [x]_\eps +\Gamma_{[x]_\eps} y_1, \frac{1}{\eps}\big([x]_\eps +\Gamma_{[x]_\eps} y_1\big)\Big) \Big\}\nonumber \\
= \bigcup_{k=0}^{N_\eps} [x_k^\eps, x_{k+1}^\eps) \times Y^\eps_k\subset [0,1]\times Y^*.\hspace*{8.15cm}
\end{eqnarray}
 where 
 $$Y_k^\eps= \big\{ (y_1,y_2):  0< y_1< l(x_k^\eps), 0<y_2< G \big(  x_k^\eps +\Gamma_{x_k^\eps} y_1, \frac{1}{\eps}\big(x_k^\eps +\Gamma_{x_k^\eps} y_1\big)\big) \big\},\; k=0,1,\ldots, N_\eps.$$
 Then, we have to prove that the sequence of these three dimensional domains $W^\eps$ 
converges in certain sense to the fixed domain $W$ as $\eps$ goes to zero. (See Proposition (\ref{domainunfolding}) below).
\item Even if $\varphi$ is very regular, $\mathcal{T_\eps(\varphi)}$ does not inherit regularity  as a function of $(y_1, y_2)$. This is a delicate point to obtain the limit of $\mathcal{T_\eps}(\frac{\partial\varphi}{x_i}),$
$i=1,2$. To overcome this difficulty we will use an approach inspired by \cite{AL}.
 \end{enumerate}

 \begin{figure}[H]
  \centering
    \includegraphics [width=0.5\textwidth]{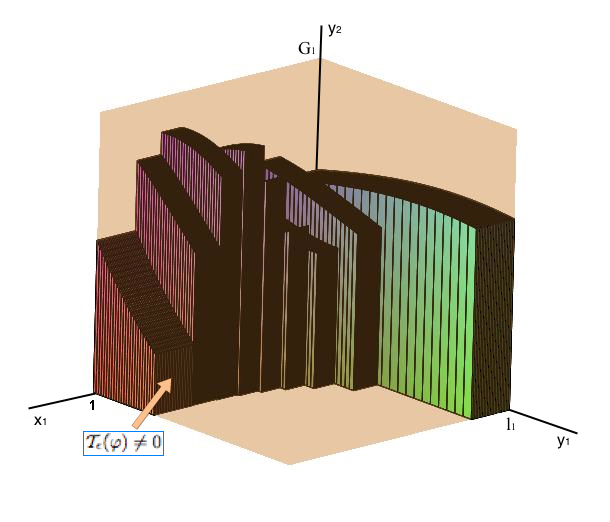}
    \caption{The set $W^\eps$, the support of $\mathcal{T}_\eps(\varphi)$. }
    \label{domain3}
\end{figure}

 In the following proposition we show the main properties of unfolding operator which will be used. Some of them are straightforward  and their proofs are omitted.
\begin{proposition}\label{properties}
The unfolding operator $\mathcal{T_\eps}$ associated to a general admissible partition $\{x_k^\eps\}$  has the following properties:
\begin{enumerate}
\item[i)] $ \mathcal{T_\eps}$ is a linear operator.
\item[ii)] $\mathcal{T}_\eps(\varphi \psi)=\mathcal{T_\eps(\varphi)} \mathcal{T_\eps(\psi)}$ and $\mathcal{T}_\eps( f \circ \psi) = f \circ \mathcal{T_\eps(\psi)},$ $\, \forall \, \varphi, \psi$ Lebesgue-measurable functions in $R^\eps$ and $f: \R \to \R$ a continuous function with $f(0)=0.$
\item[iii)] Unfolding criterion for integrals $($u.c.i.$)$ :
\begin{equation}\label{u.c.i}
\begin{gathered}
\int_{ (0,1) \times Y^*}\frac{1}{l([x]_\eps)} \mathcal{T_\eps(\varphi)} (x, y_1, y_2) dx dy_1 dy_2 = \frac{1}{\eps}\int_{R ^\eps} \varphi (x, y) dx dy, \quad \forall \, \varphi \in L^1(R^\eps),\\
\frac{1}{\eps} \int_{R ^\eps} l([x]_\eps)\varphi (x, y) dx dy  = \int_{ (0,1) \times Y^*} \mathcal{T_\eps(\varphi)} (x, y_1, y_2) dx dy_1 dy_2, \quad \forall \, \varphi \in L^1(R^\eps).
 \end{gathered}
\end{equation}
\item[iv)] For every $\varphi \in L^p(R^\eps)$, $\mathcal{T_\eps(\varphi)} \in L^p\big( (0,1)\times Y^*\big)$, with $1\leq p < \infty$. In addition, the following relationship exists between their norms:
$$\|\mathcal{T_\eps(\varphi)}\|_{L^p\big( (0,1)\times Y^*\big)}\leq \Big(\frac{l_1}{\eps}\Big)^{\frac{1}{p}} \, \|\varphi\|_{L^p(R^\eps)}, $$
$$ \Big(\frac{l_0}{\eps}\Big)^{\frac{1}{p}} \|\varphi\|_{L^p(R^\eps)} \leq \|\mathcal{T_\eps(\varphi)}\|_{L^p\big( (0,1)\times Y^*\big)}.$$
In the special case $p=\infty$, 
$$\|\mathcal{T_\eps(\varphi)}\|_{L^\infty \big( (0,1)\times Y^*\big)}=  \|\varphi\|_{L^\infty(R^\eps)}.$$
\item[v)]  Let $\psi \in  C^\infty_\#(W) $. 
 We define the sequence $\{\psi^\eps\}$ by
$$\psi^\eps(x, y) = \psi\big(x, \frac{x}{\eps}, \frac{y}{\eps}) \quad \forall (x, y) \in R^\eps,$$
then, $ \psi^\eps \in C^\infty( \overline{R^\eps})$ and
$$\mathcal{T_\eps(\psi^\eps)} (x, y_1, y_2) =  \widetilde{\psi} \Big( [x]_\eps +\Gamma_{[x]_\eps} y_1, \frac{ [x]_\eps +\Gamma_{[x]_\eps} y_1}{\eps}, y_2\Big) \chi^\eps\Big( [x]_\eps +\Gamma_{[x]_\eps} y_1, \eps y_2\Big) \chi_{\big(0,l([x]_\eps)\big)}(y_1),$$ 
for all $ (x, y_1, y_2) \in (0, 1) \times Y^*.$
\end{enumerate}

\end{proposition}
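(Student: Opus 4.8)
The plan is to read everything off the explicit definition of $\mathcal{T}_\eps$, treating the integral identity (iii) as the computational heart from which the norm bounds (iv) follow, while (i), (ii) and (v) are direct formal manipulations. For (i), linearity is immediate because both the extension-by-zero operator $\widetilde{(\cdot)}$ and the evaluation $(x,y_1,y_2)\mapsto([x]_\eps+\Gamma_{[x]_\eps}y_1,\eps y_2)$ are linear in $\varphi$. For (ii), I would use that extension by zero is multiplicative, $\widetilde{\varphi\psi}=\widetilde\varphi\,\widetilde\psi$, so the product rule holds on the support $y_1\in(0,l([x]_\eps))$ and is trivially $0=0$ elsewhere; for $\mathcal{T}_\eps(f\circ\psi)=f\circ\mathcal{T}_\eps(\psi)$ the hypothesis $f(0)=0$ is exactly what makes $\widetilde{f\circ\psi}=f\circ\widetilde\psi$ (both sides agree on $R^\eps$, and off $R^\eps$ both equal $f(0)=0$), after which the composition rule is clear, including the region $y_1\in(l([x]_\eps),l_1)$ where $\mathcal{T}_\eps(\psi)=0$ and $f(0)=0$.

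The key step is (iii). Since $[x]_\eps$ and $\Gamma_{[x]_\eps}$ are constant on each subinterval $[x_k^\eps,x_{k+1}^\eps)$ of the admissible partition, I would split $\int_{(0,1)\times Y^*}$ into a sum over $k$; on each piece the integrand is independent of the outer variable $x$, so the $x$-integration merely contributes the length $x_{k+1}^\eps-x_k^\eps$. I then perform the substitution $s=x_k^\eps+\Gamma_{x_k^\eps}y_1$, $t=\eps y_2$, whose Jacobian is $\Gamma_{x_k^\eps}\,\eps=\frac{x_{k+1}^\eps-x_k^\eps}{l(x_k^\eps)}\,\eps$. The crucial point is that letting $y_1$ range over $(0,l(x_k^\eps))$ makes $s$ sweep exactly $(x_k^\eps,x_{k+1}^\eps)$, so the cells reassemble into $(0,1)$ with neither gap nor overlap. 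With the weight $1/l([x]_\eps)$ the factor $l(x_k^\eps)$ cancels, and since $\widetilde\varphi$ vanishes for $t>\eps G(s,s/\eps)$ the $t$-integral up to $\eps G_1$ collapses onto $R^\eps$, giving the first identity; carrying the weight $l([x]_\eps)$ instead yields the second. The main obstacle, and the genuine departure from the classical equispaced periodic setting, is precisely this bookkeeping of the Jacobian factors $\Gamma_{x_k^\eps}$ on a non-uniform partition and the verification that the endpoints match.

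Property (iv) then follows almost formally: applying (ii) with $f(t)=|t|^p$ (which satisfies $f(0)=0$) gives $|\mathcal{T}_\eps(\varphi)|^p=\mathcal{T}_\eps(|\varphi|^p)$, and integrating via the second identity in (iii) yields $\|\mathcal{T}_\eps(\varphi)\|_{L^p}^p=\frac1\eps\int_{R^\eps}l([x]_\eps)\,|\varphi|^p$; the bounds $l_0\le l([x]_\eps)<l_1$ then produce the stated two-sided estimate after taking $p$-th roots. For $p=\infty$ the equality holds because on its support $W^\eps$ the function $\mathcal{T}_\eps(\varphi)$ takes exactly the values of $\widetilde\varphi$ on $R^\eps$ — the evaluation map carries $W^\eps$ onto $R^\eps$ with null sets corresponding to null sets in both directions — while the zero-extension region $y_1\in(l([x]_\eps),l_1)$ cannot raise the essential supremum. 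Finally, for (v), smoothness of $\psi^\eps$ on $\overline{R^\eps}$ is just the composition of the periodic $C^\infty$ extension of $\psi$ with the smooth map $(x,y)\mapsto(x,x/\eps,y/\eps)$, well-defined since $0<y<\eps G(x,x/\eps)$ forces $0<y/\eps<G(x,x/\eps)$; the closed formula is obtained by substituting $s=[x]_\eps+\Gamma_{[x]_\eps}y_1$ and $t=\eps y_2$ into $\widetilde{\psi^\eps}(s,t)=\psi(s,s/\eps,t/\eps)\,\chi^\eps(s,t)$ and using $t/\eps=y_2$, with $\chi_{(0,l([x]_\eps))}(y_1)$ recording the support restriction.
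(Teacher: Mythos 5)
Your proposal is correct and takes essentially the same route as the paper: properties (i), (ii) and (v) are read directly off the definition, and the key identity (iii) is established exactly as in the paper's proof, by splitting the integral over the partition intervals $[x_k^\eps, x_{k+1}^\eps)$, using that $[x]_\eps$ and $\Gamma_{[x]_\eps}$ are constant there, and changing variables with Jacobian $\Gamma_{x_k^\eps}\,\eps$ so the cells reassemble into $(0,1)$. The paper dispatches (iv) with ``consequence of iii)''; your explicit deduction via (ii) applied to $f(t)=|t|^p$ together with $l_0 \le l([x]_\eps) \le l_1$, and your treatment of the $p=\infty$ case, simply fill in the details the paper leaves implicit.
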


\begin{proof}
\begin{enumerate}
\item[$i)$]  Immediate from the definition of the unfolding operator.
 \item[$ii)$] Simple consequence of definition of the unfolding operator.
 \item[$iii)$] Let $\varphi \in L^1(R^\eps)$. The proof is similar for both equalities so that we will see only the first one:
\begin{eqnarray*}
\lefteqn{ \int_{(0, 1)\times Y^*} \frac{1}{l([x]_\eps)} \mathcal{T_\eps(\varphi)}  (x, y_1, y_2)\;  dx dy_1 dy_2}\\
 & = & \sum_{k=0}^{N_\eps}\int_{(x_{k}^\eps, x_{k+1}^\eps) \times Y^*} \frac{1}{l(x_{k}^\eps)}\widetilde{\varphi} \Big(x_{k}^\eps +\Gamma_{x_{k}^\eps} y_1, \eps y_2\Big)\chi_{(0,l(x_{k}^\eps))}(y_1) \;dx dy_1 dy_2 \\
 & = & \sum_{k=0}^{N_\eps} (x_{k+1}^\eps - x_{k}^\eps) \int_{\big(0, l(x_{k}^\eps)\big) \times \big(0, G_1\big)} \frac{1}{l(x_{k}^\eps)}\widetilde{\varphi} \Big(x_{k}^\eps +\Gamma_{x_{k}^\eps} y_1, \eps y_2\Big) \;  dy_1 dy_2 \\
& = & \sum_{k=0}^{N_\eps}\frac{1}{\eps}\int_{(x_{k}^\eps, x_{k+1}^\eps) \times (0, \eps G_1)} \widetilde{\varphi}(x, y)\; dx dy\\
& = & \frac{1}{\eps} \int_{(0, 1) \times (0, \eps G_1)}  \widetilde{\varphi}(x, y) \;dy dx = \frac{1}{\eps}\int_{R ^\eps} \varphi (x, y) \;dx dy.
 \end{eqnarray*}
 \item[$iv)$] Consequence of  iii). 
 \item[$v)$] First we will see that $\{\psi^\eps\}$ is well defined. For every $x\in (0,1)$ there exists $k \in \N$ large enough so that $\frac{x}{\eps} - kl(x) \in (0, l(x)).$ Furthermore,  since $(x, y) \in R^\eps$ we have that $0 <y < \eps G\big(x, \frac{x}{\eps}\big) = G\big(x, \frac{x}{\eps} - kl(x)\big)$. Hence, $\big(\frac{x}{\eps} - kl(x), \frac {y}{\eps}) \in Y^*(x)$ and we can conclude that the function is correctly defined.

 Now, the proof is immediate from the regularity of $\psi$ and the Definition \ref{unfolding}. 

 \end{enumerate} 
\end{proof}

\begin{remark}\label{important}
Properties iii) and v) will be very important when obtaining  the homogenized limit problem.  As a matter of fact, property iii) (u.c.i property), shows that the unfolding operator preserves the integral of the functions (up to the multiplicative piecewise constant function $\frac{\eps}{l([\cdot]_{\eps})}$) and property v) shows the relationship between test functions. 
\end{remark}

\begin{remark}\label{norm}
 Due to the order of the height of the thin domain the factor $\frac{1}{\eps}$ appears in the criterion for integrals and in property iv). Therefore, it makes sense to
 consider the following measure in the thin domain
$$\rho_{\eps}(\mathcal{O}) = \frac{1}{\eps}|\mathcal{O}|, \; \forall \mathcal{O} \subset R^\eps.$$
This measure allows us to  introduce the spaces $L^p( R^\eps, \rho_\eps)$ and  $W^{1,p}( R^\eps, \rho_\eps)$, for $1\leq p < \infty$
endowed with the norms obtained rescaling the usual norms by the factor $\frac{1}{\eps}$, that is, 

$$|||\varphi|||_{L^p(R^\eps)} = \eps^{-1/p}||\varphi||_{L^p(R^\eps)},  \quad \forall \varphi \in L^p(R^\eps),$$
$$|||\varphi|||_{W^{1,p}( R^\eps)} = \eps^{-1/p}||\varphi||_{W^{1,p}( R^\eps)}, \quad \forall \varphi \in W^{1,p}(R^\eps).$$

Then, using the notation above we can rewrite the property iv) as
$$\|\mathcal{T_\eps(\varphi)}\|_{L^p\big( (0,1)\times Y^*\big)}\leq {l_1}^{1/p} \, |||\varphi|||_{L^p(R^\eps)}, $$
$$ |||\varphi|||_{L^p(R^\eps)} \leq \frac{1}{{l_0}^{1/p}}\|\mathcal{T_\eps(\varphi)}\|_{L^p\big( (0,1)\times Y^*\big)}.$$

\end{remark}


\section{Convergence properties of the unfolding operator}\label{Sec-Convergence} 

In this section we analyze the convergence properties of the unfolding operator defined in the previous section, as $\eps \rightarrow 0$. To have good convergence properties of the unfolding operator we will need to choose an appropriate admissible partition which is related to the variable period $l(x)$ of the boundary of the thin domain and we will denote this special partition as the $l(x)$-partition.  
 To construct the $l(x)$-partition, first of all, notice that from hypothesis  {\bf (H)} the function $x\to \frac{x}{l(x)}$ has the set of critical points of measure zero and the inverse image of 
 every point is at most a finite set.
 
 \begin{definition}\label{partition}
 For every $\eps>0$ fixed, we define $M^\eps$ the largest integer such that $ \displaystyle M^\eps \eps \leq \max_{x\in [0,1]}\Big\{\frac{x}{l(x)}\Big\}$ and consider  the points $x\in (0,1)$ such that $\frac{x}{l(x)}= k\eps$, for all $k=1,2,\ldots, M^\eps$ (see Figure 2).  Hence,  the $l(x)$-partition is defined by $\{x_i^\eps\}$, $i=0,1, \ldots, N^\eps+1$ such that 
\begin{itemize}
\item[$\bullet$] $x^\eps_0=0$ and $x_{N^\eps+1}^\eps=1.$
\item[$\bullet$] Given $x_i^\eps$ a point of the partition, $i\neq N^\eps+1$, there exists $k \in \{0, \dots, M^\eps\}$ such that $\frac{x_i^\eps}{l(x_i^\eps)}= k\eps$.
\item[$\bullet$] Two consecutive points of the partition satisfies $$ \Big|\frac{x_{i+1}}{l(x_{i+1})} - \frac{x_i}{l(x_i)}\Big|=\eps, \hbox{  or }\quad \Big|\frac{x_{i+1}}{l(x_{i+1})} - \frac{x_i}{l(x_i)}\Big|=0$$
\end{itemize}
 \end{definition}

\begin{figure}[H]
  \centering{\includegraphics [width=11cm, height=5cm]{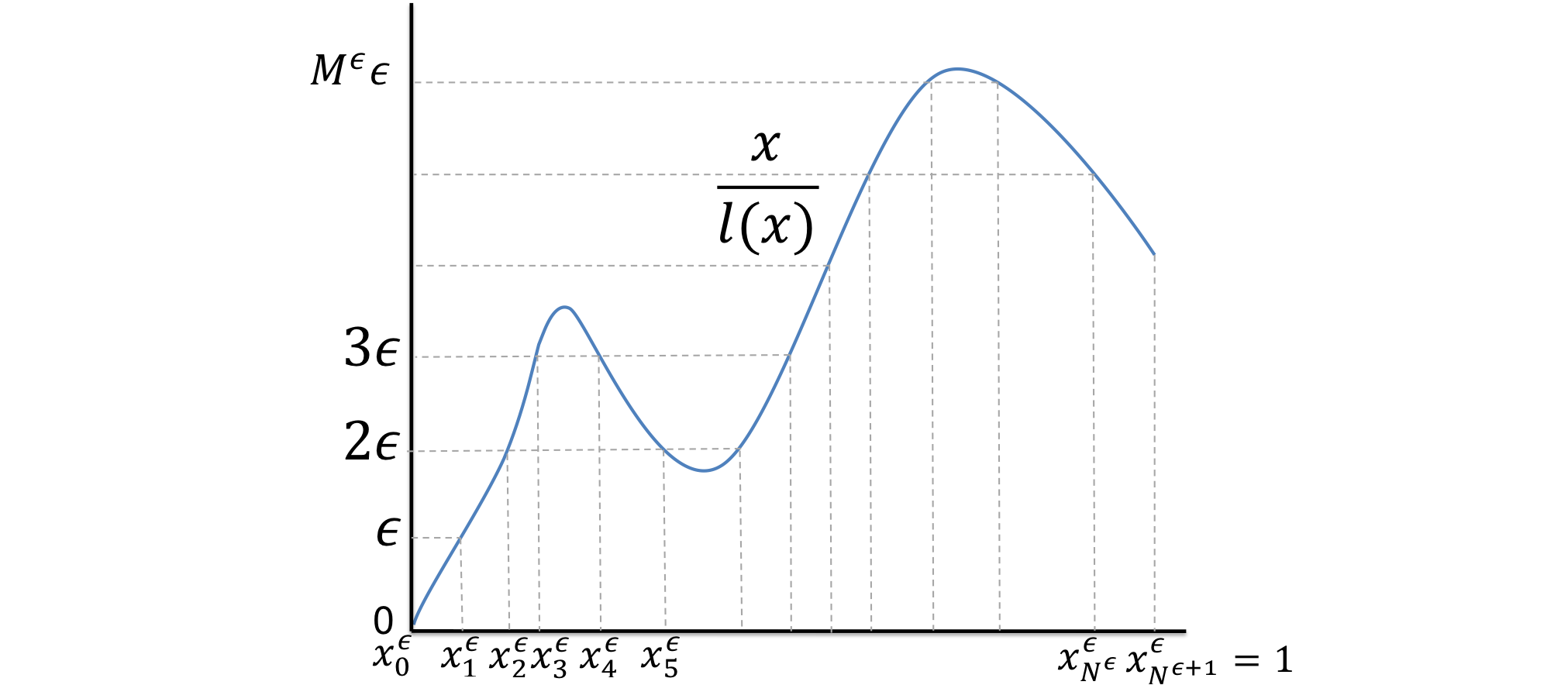}
    \caption{The $l(x)$-partition $\{x_k^\eps\}$}}
    \label{thin}
\end{figure}

The $l(x)$-partition is correctly defined and it is an admissible partition by assumption  { \bf (H)}. This hypothesis guarantees that inverse image of every $y\in \R$ by the function $\frac{x}{l(x)}$ is at most a finite set and, as a consequence, the $l(x)$-partition has a finite number of points. 
Moreover, notice that the distance between two 
consecutive points is not constant but somehow reproduces the locally periodic structure of the thin domain and satisfies that for every $x\in(0,1) \setminus A$ there exist
$\eps_1>0$ and a constant $C$ such that $0\leq x-[x]_\eps<C\eps, \hbox{ for }  0<\eps<\eps_1.$

\begin{remark}
To ``justify ''  the choice of the $l(x)$-partition, consider the particular case where the function which defines the oscillatory boundary of the thin domain is given by
$$G(x,y) = 2 + \cos\Big(\frac{2\pi y}{l(x)}\Big)$$
with $l(\cdot)$ verifying the assumption {\bf (H)}.
If we look at the points which are at the top part of the oscillatory boundary, that is, $G(x,x/\eps) =3$, or equivalently,   
$$ \cos \Big(\frac{2\pi x}{\eps l(x)}\Big) =1,$$
 we obtain a sequence of points which verify
$$\frac{2\pi x}{\eps l(x)}=2\pi k, \; k=0,1,2\dots$$
Observe that these points coincide with the points of the $l(x)$-partition. It turns out that these points also work when the amplitude of the oscillations also varies in space since they continue 
to reproduce in a good way the locally oscillatory behavior of the oscillations.
\end{remark}

\begin{remark}
Observe that hypothesis { \bf (H)} is a natural assumption to define the $l(x)-$partition. In addition, it allows us to deal with a large kind of period functions. For example, we may consider the case where the function $h(x)=\frac{x}{l(x)}$ has a finite number of critical points which generates some strange oscillations in the thin domain, (see Figure 2 ). 

Moreover, the behavior of the oscillatory boundary is more pathological if we consider for instance a period $l(\cdot)$ such that $h(x)$ has a countably infinite set of critical points, (see Figure 3).
\end{remark}

\par\medskip

We start showing the following key property of the partition $\{x_k^\eps\}$  chosen in this Section. 

\begin{proposition}\label{Convergence-of-G}
If $\{x_k^\eps\}$ is the $l(x)$-partition and with the notation above, we have the following
\begin{equation}\label{convergence 2}
G\big(  [x]_\eps +\Gamma_{[x]_\eps} y_1, \frac{1}{\eps}( [x]_\eps  +\Gamma_{[x]_\eps} y_1)\big) - G(x, y_1)\stackrel{\eps\to 0}\longrightarrow 0 \; \hbox{ a.e  } (x,y_1)\in W_0.
\end{equation}
\end{proposition}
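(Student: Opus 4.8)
The plan is to reduce everything to the elementary fact, built into the $l(x)$-partition, that the left endpoint $a:=[x]_\eps=x_k^\eps$ of the cell containing $x$ satisfies $h(a)=a/l(a)=m\eps$ for some integer $m$, so that $a/\eps=m\,l(a)$ is an \emph{exact} integer multiple of the period $l(a)$. Write $\xi:=[x]_\eps+\Gamma_{[x]_\eps}y_1$ for the first argument appearing in \eqref{convergence 2}. Since $G(\xi,\cdot)$ is $l(\xi)$-periodic, for every integer $m$ we have $G(\xi,\xi/\eps)=G(\xi,\xi/\eps-m\,l(\xi))$. Hence it suffices to establish two facts: (i) $\xi\to x$, and (ii) for the integer $m$ above, the reduced microscopic variable $\xi/\eps-m\,l(\xi)$ converges to $y_1$. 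Granting these, continuity of $G$ gives $G(\xi,\xi/\eps)=G(\xi,\xi/\eps-m\,l(\xi))\to G(x,y_1)$, which is the claim.

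For (i) I would fix $x\in(0,1)\setminus A$ and $y_1\in(0,l(x))$; these exclude only a set of measure zero by hypothesis {\bf (H)}. Since $x\notin A$, $h'(x)\neq0$, so $h(x)=x/l(x)$ is strictly monotone near $x$ and, for $\eps$ small, the cell $[a,b):=[x_k^\eps,x_{k+1}^\eps)$ containing $x$ is non-degenerate, i.e. $|h(b)-h(a)|=\eps$. The mean value theorem then gives $b-a=\eps/|h'(\zeta)|$ with $\zeta\in(a,b)$, hence $b-a\le C\eps$. Both $x$ and $\xi$ lie in $[a,b)$ (note $y_1<l(x)$ forces $y_1<l(a)$ for small $\eps$, so $\xi\in(a,b)$), whence $|\xi-x|\le b-a\le C\eps\to0$.

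Step (ii) is the main obstacle. Using $m\eps=a/l(a)$ and $\xi=a+\tfrac{b-a}{l(a)}y_1$, together with $l(a)-l(\xi)=-l'(\eta)(\xi-a)$ (mean value theorem), a short manipulation gives
\[
\frac{\xi}{\eps}-m\,l(\xi)=\frac{b-a}{\eps\,l(a)}\Big(1-\frac{a\,l'(\eta)}{l(a)}\Big)y_1 .
\]
The delicate point is precisely that $m\sim h(x)/\eps\sim\eps^{-1}$ while the period mismatch $l(a)-l(\xi)$ is of order $\eps$, so their product is $O(1)$ and cannot be thrown away: it must be tracked through this expansion. Passing to the limit with $b-a=\eps/|h'(\zeta)|$ and $\zeta,\eta\to x$, and using $h'(x)=\big(l(x)-xl'(x)\big)/l(x)^2$, the prefactor telescopes:
\[
\frac{b-a}{\eps\,l(a)}\Big(1-\frac{a\,l'(\eta)}{l(a)}\Big)\;\longrightarrow\;\frac{1}{|h'(x)|\,l(x)}\cdot\frac{l(x)-xl'(x)}{l(x)}=\operatorname{sgn}\big(h'(x)\big).
\]
Thus $\xi/\eps-m\,l(\xi)\to\operatorname{sgn}(h'(x))\,y_1$. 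On the set where $h'>0$ (in particular the principal case $h'(\cdot)>0$) the limit is exactly $y_1$ and the argument closes.

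Finally, combining (i) and (ii) with the joint continuity of $G$ (the first argument $\xi\to x$ stays in $[0,1]$ and the reduced second argument converges to the bounded value $y_1$), the difference in \eqref{convergence 2} tends to $0$ for a.e. $(x,y_1)\in W_0$. I would note that where $h'<0$ the same computation yields the limit $-y_1\equiv l(x)-y_1$ modulo $l(x)$; since $y_1\mapsto l(x)-y_1$ is the reflection symmetry of the basic cell $Y^*(x)$, this merely relabels the microscopic variable and does not affect any of the subsequent cell-averages, so the conclusion is recovered in the general case as well. The measure-zero exclusions (the critical set $A$, the finitely many degenerate cells straddling extrema of $h$, and the boundary $y_1=l(x)$) are exactly those controlled by {\bf (H)}.
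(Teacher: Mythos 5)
Your argument has the same skeleton as the paper's own proof: exploit the $l(\xi)$-periodicity of $G(\xi,\cdot)$ together with the defining property $[x]_\eps=m\,\eps\,l([x]_\eps)$, $m\in\N$, of the $l(x)$-partition, and then prove the two convergences $\xi:=[x]_\eps+\Gamma_{[x]_\eps}y_1\to x$ (the paper's \eqref{1} and \eqref{1variable}) and convergence of the reduced phase $\xi/\eps-m\,l(\xi)$ (the paper's \eqref{2variable}). Your single mean-value identity
$\xi/\eps-m\,l(\xi)=\frac{b-a}{\eps\,l(a)}\big(1-\frac{a\,l'(\eta)}{l(a)}\big)y_1$
is exactly the paper's two limits (a) and (b) telescoped into one step, and substituting $b-a=\eps/|h'(\zeta)|$ reproduces, on the set $\{h'>0\}$, the paper's prefactor $\big(1-\frac{x}{l(x)}l'(x)\big)^{-1}$ computation; there your proof is correct and essentially identical, with the mean value theorem replacing the paper's algebraic manipulation of $p^\eps$.

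The substantive point is the sign, and here your bookkeeping is sharper than the paper's. Your limit $\operatorname{sgn}\big(h'(x)\big)\,y_1$ is correct: if one actually carries out the case the paper dismisses with ``the proof is similar in the other case,'' i.e. $x^\eps_{k+1}=\eps(p^\eps-1)\,l(x^\eps_{k+1})$, step (a) becomes $\big(1-\frac{x^\eps_{k}}{l(x^\eps_{k})}\frac{l(x^\eps_{k+1})-l(x^\eps_{k})}{x^\eps_{k+1}-x^\eps_{k}}\big)\frac{1}{\eps}\Gamma_{[x]_\eps}y_1=-\frac{l(x^\eps_{k+1})}{l(x^\eps_{k})}\,y_1$ (consistent, since $1-\frac{x\,l'(x)}{l(x)}=l(x)h'(x)<0$ there), and the sum of (a) and (b) converges to $-y_1$, in agreement with your formula. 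Consequently, on intervals where $h'<0$ --- which hypothesis \textbf{(H)} explicitly permits, see the paper's Figures 2 and 3 --- the unfolded boundary converges to the reflected profile $G(x,l(x)-y_1)$, and for a nonsymmetric profile such as $G(x,y)=2+\sin\big(2\pi y/l(x)\big)$ the convergence \eqref{convergence 2} as literally stated fails on a set of positive measure. So your proposal proves the proposition as stated only on $\{h'>0\}$ --- but by the computation above, so does the paper's proof; you have uncovered a genuine subtlety hidden behind ``similar in the other case.'' Be aware, though, that your closing paragraph is a repair in spirit only: the reflection-invariance claim is true (one checks that $\hat X=-X\circ(\hbox{reflection})$ solves the cell problem \eqref{AUXG} on the reflected cell, so $|Y^*(x)|$ and $r(x)$ in \eqref{RPFL} are unchanged and the homogenized problem \eqref{homogenized problem} is untouched), but as written it is a remark about downstream consequences, not a proof of the stated a.e.\ convergence; to make it rigorous one would restate Proposition \ref{Convergence-of-G} (and the identification of the limits $\xi_0,\xi_1$ in Theorem \ref{convergence prop}) with the reflected cell on $\{h'<0\}$, or equivalently redefine the unfolding there by measuring $y_1$ from the right endpoint of each cell.
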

\begin{proof}
In order to show (\ref{convergence 2}), we observe that since $G(x, \cdot)$ is $l(x)$-periodic and all the points  constructed in Definition \ref{partition} satisfy $\frac{[x]_\eps}{\eps l([x]_\eps)} = k^\eps \in \N$ we have
$$G\Big(  [x]_\eps +\Gamma_{[x]_\eps} y_1, \frac{1}{\eps}\big( [x]_\eps  +\Gamma_{[x]_\eps} y_1\big)\Big) = G\Big(  [x]_\eps +\Gamma_{[x]_\eps} y_1, \frac{1}{\eps}\big( [x]_\eps - l\Big([x]_\eps +\Gamma_{[x]_\eps} y_1\Big)\frac{[x]_\eps}{l([x]_\eps)} +\Gamma_{[x]_\eps} y_1\big)\Big). $$

Then, using the regularity properties of $G(\cdot, \cdot)$ and $l(\cdot)$, see \textbf{(H)}, we only have to prove the following convergences for almost every $x \in (0,1)$ and $y_1 \in (0, l(x))$:
\begin{equation}\label{1}
[x]_\eps \stackrel{\eps\to 0}\longrightarrow x \; \hbox{a.e } x \in (0,1).
\end{equation}

\begin{equation}\label{1variable}
[x]_\eps +\Gamma_{[x]_\eps} y_1 \stackrel{\eps\to 0}\longrightarrow x,  \; \hbox{a.e  }(x, y_1) \in W_0.
\end{equation}

\begin{equation}\label{2variable}
\frac{1}{\eps}\Big( [x]_\eps - l\Big([x]_\eps +\Gamma_{[x]_\eps} y_1\Big)\frac{[x]_\eps}{l([x]_\eps)} +\Gamma_{[x]_\eps} y_1\Big)\stackrel{\eps\to 0}\longrightarrow y_1,   \; \hbox{a.e } (x, y_1) \in W_0.
\end{equation}
Since the hypothesis \textbf{(H)} guarantees that the set A has null measure it is enough to prove the convegences of the Proposition for all $x\in (0,1)\setminus A$.

Let $x$ be a point in $(0,1)\setminus A$, from the definition of the $l(x)-$partition we know that there exist $\eps_1>0$ and a constant $C$ such that $x\in (x_k^\eps, x_{k+1}^\eps)$ and $0<x_{k+1}^\eps-x_k^\eps<C\eps,$ for some $k \in \{0,1,2, \dots, N^\eps+1\}$ and $\forall \, 0<\eps<\eps_1.$
Therefore, the convergences (\ref{1}) and (\ref{1variable}) are immediate from the definition of $[x]_\eps $ and $\Gamma_{[x]_\eps}$.

In order to prove (\ref{2variable}) we assume that $x \in (0,1)\setminus A$. From the definition of the $l(x)-$partition we know that for $\eps$ small enough there exists $p^\eps \in \{0,1,\dots,M^\eps\}$ such that $[x]_\eps = x^\eps_{k} = \eps p^\eps l(x^\eps_{k})$ and $x^\eps_{k+1}= \eps (p^\eps+1) l(x^\eps_{k+1})$, the function $\frac{x}{l(x)}$ increases in the small interval $(x^\eps_{k}, x^\eps_{k+1})$, or    $x^\eps_{k+1}= \eps (p^\eps-1) l(x^\eps_{k+1})$ in case $\frac{x}{l(x)}$ decreases. We suppose that $x^\eps_{k+1}= \eps (p^\eps+1) l(x^\eps_{k+1})$, the proof is similar in the other case. 

We study the limit of the following two terms: 
\par\bigskip\noindent {\bf  (a). Limit of $\frac{1}{\eps}\Gamma_{[x]_\eps} y_1$. }

Since $x^\eps_{k+1} =  \eps (p^\eps+1) l(x^\eps_{k+1})$ and $x^\eps_{k} =  \eps p^\eps l(x^\eps_{k})$ we have
\begin{eqnarray*}
\lefteqn{\frac{1}{\eps}\Gamma_{[x]_\eps} y_1 =  \frac{x^\eps_{k+1} - x^\eps_{k}}{\eps l(x^\eps_{k})}y_1= p^\eps \frac{y_1}{l(x^\eps_{k})} \big(l(x^\eps_{k+1}) - l(x^\eps_{k})\big) + \frac{y_1}{l(x^\eps_{k})}l(x^\eps_{k+1})}\\
& = &  \frac{x^\eps_{k}}{\eps l(x^\eps_{k})^2}y_1\big(l(x^\eps_{k+1}) - l(x^\eps_{k})\big) +  \frac{y_1}{l(x^\eps_{k})}l(x^\eps_{k+1}) \\
& = &  \frac{x^\eps_{k}}{ l(x^\eps_{k})}y_1 \frac{l(x^\eps_{k+1}) - l(x^\eps_{k})}{x^\eps_{k+1} - x^\eps_{k}} \frac{x^\eps_{k+1} - x^\eps_{k}}{\eps l(x^\eps_{k})} + \frac{y_1}{l(x^\eps_{k})}l(x^\eps_{k+1})\\
& = & \frac{x^\eps_{k}}{ l(x^\eps_{k})} \frac{l(x^\eps_{k+1}) - l(x^\eps_{k})}{x^\eps_{k+1} - x^\eps_{k}}\frac{1}{\eps}\Gamma_{[x]_\eps} y_1  + \frac{y_1}{l(x^\eps_{k})}l(x^\eps_{k+1}).
\end{eqnarray*}

Consequently we obtain
\begin{equation*}
\Big(1  - \frac{x^\eps_{k}}{ l(x^\eps_{k})} \frac{l(x^\eps_{k+1}) - l(x^\eps_{k})}{x^\eps_{k+1} - x^\eps_{k}}\Big)\frac{1}{\eps}\Gamma_{[x]_\eps} y_1 = \frac{y_1}{l(x^\eps_{k})}l(x^\eps_{k+1}).
\end{equation*}

Moreover,  we can prove that
 $$ 1  - \frac{x^\eps_{k}}{ l(x^\eps_{k})} \frac{l(x^\eps_{k+1}) - l(x^\eps_{k})}{x^\eps_{k+1} - x^\eps_{k}}\neq 0 \quad \text{for $\eps$ sufficiently small}.$$

  If we suppose that this is not true and we pass to the limit then we have , at least for a subsequence, that
 \begin{equation*}
l'(x) = \frac{l(x)}{x}. 
\end{equation*} 
This contradicts the fact that $x \not \in A$.

Hence we can write
 \begin{equation}\label{gamma}
\frac{1}{\eps}\Gamma_{[x]_\eps} y_1 = \Big(1  - \frac{x^\eps_{k}}{ l(x^\eps_{k})} \frac{l(x^\eps_{k+1}) - l(x^\eps_{k})}{x^\eps_{k+1} - x^\eps_{k}}\Big)^{-1}\frac{y_1}{l(x^\eps_{k})}l(x^\eps_{k+1}).
\end{equation}

Taking into account the above equality (\ref{gamma}), the hypothesis \textbf{(H)} and $x \not \in A$ we get the limit:
\begin{equation}\label{limit gamma}
\frac{1}{\eps}\Gamma_{[x]_\eps} y_1 \stackrel{\eps\to 0}\longrightarrow \Big( 1- \frac{x}{l(x)}l'(x)\Big)^{-1}y_1.
\end{equation}

\par\bigskip\noindent {\bf  (b). Limit of $\frac{1}{\eps}\Big( [x]_\eps - l\big([x]_\eps +\Gamma_{[x]_\eps} y_1\big)\frac{[x]_\eps}{l([x]_\eps)}\Big)$. }
\begin{eqnarray*}
\frac{1}{\eps}\Big( [x]_\eps - l([x]_\eps +\Gamma_{[x]_\eps} y_1)\frac{[x]_\eps}{l([x]_\eps)}\Big) =  - \frac{x^\eps_{k}}{  l(x^\eps_{k})} \frac{l(x^\eps_{k} +\Gamma_{x^\eps_{k}} y_1) - l(x^\eps_{k})} 
{\Gamma_{x^\eps_{k}} y_1} \frac{\Gamma_{x^\eps_{k}} y_1}{\eps}
\end{eqnarray*}

Now it is possible to pass to the limit in the right-hand side by using the convergence (\ref{limit gamma}). Hence, we have that
\begin{equation}\label{limit b}
\frac{1}{\eps}\Big( [x]_\eps - l([x]_\eps +\Gamma_{[x]_\eps} y_1)\frac{[x]_\eps}{l([x]_\eps)}\Big)\stackrel{\eps\to 0}\longrightarrow - \frac{x}{l(x)}l'(x) \Big( 1- \frac{x}{l(x)}l'(x)\Big)^{-1}y_1.
\end{equation}

Therefore, from (\ref{limit gamma}) and (\ref{limit b}), we obtain the converge (\ref{2variable}) 
$$\frac{1}{\eps}\big( [x]_\eps - l([x]_\eps +\Gamma_{[x]_\eps} y_1)\frac{[x]_\eps}{l([x]_\eps)} +\Gamma_{[x]_\eps} y_1\big)\stackrel{\eps\to 0}\longrightarrow - \frac{x}{l(x)}l'(x) \Big( 1- \frac{x}{l(x)}l'(x)\Big)^{-1}y_1
+  \Big( 1- \frac{x}{l(x)}l'(x)\Big)^{-1}y_1 = y_1. $$
This concludes the proof of the proposition. 
\end{proof}

 We show now that the domains $W^\eps$, see \eqref{W-eps}, converge to the domain $W$ in the sense that the characteristic functions converge strongly in $L^p$, $1\leq p < \infty$.  Let us denote by $\chi$ the characteristic function of $W$ and by $\chi^\eps$ is the characteristic function of $R^\eps$. Therefore, $T_\eps(\chi^\eps)$ is the characteristic function of $W^\eps$. The fact that $W^\eps$ `` approaches '' $W$  is expressed in the following result.

\begin{proposition}\label{domainunfolding}
With the notations above and if $\mathcal{T}_\eps$ is the unfolding operator associated to the $l(x)$-partition $\{x_k^\eps\}$, we have 
$$ \mathcal{T_\eps}(\chi^\eps) \longrightarrow \chi  \quad \hbox{s}-L^p\big( (0,1)\times Y^*\big), \hbox{ for } 1\leq p < \infty. $$
\end{proposition}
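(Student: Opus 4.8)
The plan is to exploit that $\mathcal{T}_\eps(\chi^\eps)$ and $\chi$ are both characteristic functions, so that their $L^p$ distance reduces to the measure of a symmetric difference. Since $\mathcal{T}_\eps(\chi^\eps)=\chi_{W^\eps}$ (as noted just before the statement) and $\chi=\chi_W$ take only the values $0$ and $1$, for every $1\le p<\infty$ one has $|\chi_{W^\eps}-\chi_W|^p=|\chi_{W^\eps}-\chi_W|=\chi_{W^\eps\triangle W}$ pointwise, and therefore
$$\|\mathcal{T}_\eps(\chi^\eps)-\chi\|_{L^p((0,1)\times Y^*)}^p=\big|W^\eps\triangle W\big|.$$
Thus it suffices to show $|W^\eps\triangle W|\to 0$, which I would obtain from almost-everywhere pointwise convergence of $\chi_{W^\eps}$ to $\chi_W$ together with the dominated convergence theorem on the bounded set $(0,1)\times Y^*$.

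Next I would prove that $\chi_{W^\eps}(x,y_1,y_2)\to\chi_W(x,y_1,y_2)$ for almost every $(x,y_1,y_2)$. I fix a point with $x\in(0,1)\setminus A$, with $y_1\neq l(x)$, and (when $y_1<l(x)$) with $y_2\neq G(x,y_1)$; the complement of this set is null because $A$ is null by hypothesis \textbf{(H)} and the two graphs $\{y_1=l(x)\}$ and $\{y_2=G(x,y_1)\}$ are null in $(0,1)\times Y^*$. By definition, $\chi_{W^\eps}(x,y_1,y_2)=1$ exactly when $0<y_1<l([x]_\eps)$ and $0<y_2<G_\eps(x,y_1)$, where $G_\eps(x,y_1):=G\big([x]_\eps+\Gamma_{[x]_\eps}y_1,\tfrac{1}{\eps}([x]_\eps+\Gamma_{[x]_\eps}y_1)\big)$. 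Using \eqref{1} and the continuity of $l(\cdot)$ we have $l([x]_\eps)\to l(x)$: if $y_1>l(x)$ then $y_1>l([x]_\eps)$ for $\eps$ small and both characteristic functions vanish, while if $y_1<l(x)$ then $y_1<l([x]_\eps)$ for $\eps$ small, so the first defining condition of $W^\eps$ eventually agrees with that of $W$.

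It then remains to compare the height conditions in the case $y_1<l(x)$. Here Proposition \ref{Convergence-of-G} gives precisely
$$G_\eps(x,y_1)\longrightarrow G(x,y_1),$$
so, since $y_2\neq G(x,y_1)$, the strict inequality $y_2<G_\eps(x,y_1)$ holds for all small $\eps$ if and only if $y_2<G(x,y_1)$, and symmetrically for the reverse inequality. Consequently $\chi_{W^\eps}(x,y_1,y_2)=\chi_W(x,y_1,y_2)$ for $\eps$ small, which establishes the a.e. pointwise convergence. Finally, since $0\le\chi_{W^\eps}\le 1$ and $(0,1)\times Y^*$ has finite measure, dominated convergence yields $|W^\eps\triangle W|\to0$, hence the claimed strong $L^p$ convergence.

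I expect the core of the argument to be the convergence of the height condition, which is exactly where Proposition \ref{Convergence-of-G} enters; the remaining work is the bookkeeping of the three null sets (the set $A$ and the two boundary graphs) off which the pointwise argument runs, together with checking that $[x]_\eps+\Gamma_{[x]_\eps}y_1\to x\in(0,1)$ so that the extension by zero built into $\widetilde{\chi^\eps}$ does not interfere for small $\eps$.
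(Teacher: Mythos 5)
Your proof is correct and takes essentially the same route as the paper: both arguments rest on the two key convergences $l([x]_\eps)\to l(x)$ and Proposition \ref{Convergence-of-G}, combined with Lebesgue's dominated convergence theorem on the bounded set $(0,1)\times Y^*$. The only difference is cosmetic bookkeeping — the paper reduces to $p=1$ and splits the $L^1$ integral into three regions, while you phrase the same comparison as a.e.\ pointwise convergence of the characteristic functions of $W^\eps$ and $W$ off the null sets $A$, $\{y_1=l(x)\}$ and $\{y_2=G(x,y_1)\}$.
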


\begin{proof}
Since $T_\eps(\chi^\eps)$ and $\chi$ are uniformly bounded functions (they are actually bounded by the constant 1), it is enough to prove the convergence for $p=1$. Considering the set represented by each characteristic function we can write:
\begin{eqnarray*}
 \lefteqn{  \| \mathcal{T_\eps} (\chi^\eps)-  \chi \|_{L^1\big( (0,1)\times Y^*\big)} =}\\
& &= \int_0^1\int_{Y*} | \chi^\eps\Big( [x]_\eps +\Gamma_{[x]_\eps} y_1, \eps y_2\Big) \chi_{(0,l([x]_\eps))}(y_1) - \chi(x, y_1, y_2)| dy_1 dy_2 dx\\
& &= \int_0^1\int_0^{l([x]_\eps)} 
\int_0^{G\Big(  [x]_\eps +\Gamma_{[x]_\eps} y_1, \frac{1}{\eps}\big( [x]_\eps +\Gamma_{[x]_\eps} y_1\big)\Big)}  |1 -\chi(x, y_1, y_2)| dy_2 dy_1 dx\\
& &+
\int_0^1\int_0^{l([x]_\eps)} 
\int_{G\Big(  [x]_\eps +\Gamma_{[x]_\eps} y_1, \frac{1}{\eps}\big( [x]_\eps +\Gamma_{[x]_\eps} y_1\big)\Big)}^{G_1} | \chi(x, y_1, y_2)|dy_2 dy_1 dx \\
& & +\int_0^1\int_{l([x]_\eps)}^{l_1}\int_0^{G_1} | \chi(x, y_1, y_2)|\; dy_2 dy_1 dx.
\end{eqnarray*}

With the convergence 
\begin{equation}\label{convergence l}
l([x]_\eps) - l(x)\stackrel{\eps\to 0}\longrightarrow 0, \hbox{ a.e } x \in (0,1), 
\end{equation}
which follows easily from the convergence \eqref{1} and the continuity of the function $l(\cdot)$, together with 
\eqref{convergence 2} and with the aid of  the Lebesgue's Dominated Convergence Theorem, we easily prove that $\|\mathcal{T_\eps} (\chi^\eps)-  \chi \|_{L^1\big( (0,1)\times Y^*\big)}\to 0.$

\end{proof}

\begin{remark}\label{domain1}
The proposition above is basic to get the homogenized limit problem. By using this proposition  we will be able to pass to the limit  although our unfolded functions are not defined on a fixed domain. 
Then, it justifies to introduce the set $W$ which is the expected limit domain.
\end{remark}

\begin{remark}
Note that the choice of the points of the $l(x)$-partition is a key point to obtain the convergence of the domains. Somehow, the fact that the partition suitably reflects the geometry of the oscillating domain is very critical to obtain the convergence result. For example, let's see that things may go wrong if we do not choose wisely the admissible partition, even in the purely periodic case. If we assume that the oscillatory boundary of the thin domain is given by a function
$g: \R \to \R$ $L$-periodic and we consider the partition $$x_{0}^\eps=0 < x_{1}^\eps= \eps L_1 < x_{2 }^\eps=2\eps L_1< \cdots < x_{N^\eps}^\eps=N^\eps L_1 < x_{N^\eps+1}^\eps=1,$$ 
where $L_1$ is  rationally independent of $L$. Notice that this partition is an admissible one, but in this case, Proposition \ref{Convergence-of-G} does not hold, precisely because $L_1$ and $L$ are rationally independent. 
\end{remark}

\begin{proposition}\label{testconvergence}
Let $ \varphi \in L^p(0, 1)$, $1\leq p < \infty$. Then if $\mathcal{T}_\eps$ is the unfolding operator associated to the $l(x)$-partition, we have
$$ \mathcal{T_\eps(\varphi)}  \longrightarrow \varphi \chi \quad \hbox{s}-L^p\big( (0,1)\times Y^*\big).$$
\end{proposition}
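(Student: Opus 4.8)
The plan is to prove the result first for continuous $\varphi$ and then extend to a general $\varphi\in L^p(0,1)$ by density, using the uniform boundedness of $\mathcal{T_\eps}$. Throughout I regard $\varphi\in L^p(0,1)$ as a function on $R^\eps$ that is constant in the vertical variable, $\varphi(x,y)=\varphi(x)$, which is how test functions depending only on the macroscopic variable enter the unfolding method. The key preliminary observation, read off directly from Definition \ref{unfolding} together with the identity $\widetilde{\varphi}(x',y')=\varphi(x')\chi^\eps(x',y')$, is the factorization
$$\mathcal{T_\eps(\varphi)}(x,y_1,y_2)=\varphi\big([x]_\eps+\Gamma_{[x]_\eps}y_1\big)\,\mathcal{T_\eps}(\chi^\eps)(x,y_1,y_2)\quad\text{a.e. on }(0,1)\times Y^*,$$
where for $y_1\in\big(0,l([x]_\eps)\big)$ the point $[x]_\eps+\Gamma_{[x]_\eps}y_1$ lies in $(0,1)$, so the right-hand side is well defined. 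This factorization is the device that converts the delicate issue of the $\eps$-dependent support $W^\eps$ into the already established domain convergence of Proposition \ref{domainunfolding}.

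For continuous $\varphi$ on $[0,1]$ I would then split
$$\mathcal{T_\eps(\varphi)}-\varphi\chi=\Big[\varphi\big([x]_\eps+\Gamma_{[x]_\eps}y_1\big)-\varphi(x)\Big]\mathcal{T_\eps}(\chi^\eps)+\varphi(x)\Big[\mathcal{T_\eps}(\chi^\eps)-\chi\Big].$$
The second term tends to $0$ in $L^p\big((0,1)\times Y^*\big)$ since $\varphi\in L^\infty(0,1)$ and $\|\mathcal{T_\eps}(\chi^\eps)-\chi\|_{L^p}\to0$ by Proposition \ref{domainunfolding}. For the first term, the convergence \eqref{1variable} together with the continuity of $\varphi$ gives $\varphi([x]_\eps+\Gamma_{[x]_\eps}y_1)-\varphi(x)\to0$ a.e. on $W_0$, while the whole integrand is bounded by $2\|\varphi\|_{L^\infty(0,1)}$ uniformly (recall $0\le\mathcal{T_\eps}(\chi^\eps)\le1$); Lebesgue's Dominated Convergence Theorem then yields convergence to $0$ in $L^p$. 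This settles the continuous case.

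For general $\varphi\in L^p(0,1)$ I would invoke a uniform-in-$\eps$ bound. Since $\|\varphi\|^p_{L^p(R^\eps)}=\int_0^1|\varphi(x)|^p\,\eps G(x,x/\eps)\,dx\le\eps G_1\|\varphi\|^p_{L^p(0,1)}$, property iv) of Proposition \ref{properties} gives
$$\|\mathcal{T_\eps(\varphi)}\|_{L^p\big((0,1)\times Y^*\big)}\le\Big(\frac{l_1}{\eps}\Big)^{1/p}\|\varphi\|_{L^p(R^\eps)}\le (l_1G_1)^{1/p}\|\varphi\|_{L^p(0,1)},$$
and likewise $\|\varphi\chi\|^p_{L^p((0,1)\times Y^*)}=\int_0^1|\varphi(x)|^p|Y^*(x)|\,dx\le l_1G_1\|\varphi\|^p_{L^p(0,1)}$. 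Given $\delta>0$, choose $\varphi_n\in C[0,1]$ with $\|\varphi-\varphi_n\|_{L^p(0,1)}<\delta$ and write $\mathcal{T_\eps(\varphi)}-\varphi\chi=\mathcal{T_\eps}(\varphi-\varphi_n)+\big(\mathcal{T_\eps}(\varphi_n)-\varphi_n\chi\big)+(\varphi_n-\varphi)\chi$; the first and third terms are bounded by $(l_1G_1)^{1/p}\delta$ uniformly in $\eps$, and the middle term tends to $0$ by the continuous case, so $\limsup_{\eps\to0}\|\mathcal{T_\eps(\varphi)}-\varphi\chi\|_{L^p}\le 2(l_1G_1)^{1/p}\delta$, and letting $\delta\to0$ finishes the proof. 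I expect the only genuinely delicate point to be the first paragraph, namely recognizing the factorization through $\mathcal{T_\eps}(\chi^\eps)$, which is what lets one sidestep the moving support $W^\eps$; the remaining steps are routine once the uniform bound $(l_1G_1)^{1/p}$ is in hand.
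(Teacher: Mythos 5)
Your proof is correct and follows essentially the same route as the paper: establish the convergence for a dense class of regular functions and conclude by density, using the uniform bound $\|\mathcal{T}_\eps(\varphi)\|_{L^p((0,1)\times Y^*)}\le (l_1 G_1)^{1/p}\|\varphi\|_{L^p(0,1)}$ coming from Proposition \ref{properties} iv). The only difference is that the paper declares the base case ``obvious'' for $\varphi\in\mathcal{D}(0,1)$, whereas you supply the details via the factorization through $\mathcal{T}_\eps(\chi^\eps)$, the convergence \eqref{1variable}, Proposition \ref{domainunfolding} and dominated convergence --- a worthwhile elaboration, but not a different argument.
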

\begin{proof}
The result is obvious for any $\varphi \in \mathcal{D}(0, 1)$. By density, if $\varphi \in  L^p(0, 1)$, let $\varphi_k \in \mathcal{D}(0, 1)$ such that $\varphi_k \rightarrow \varphi$ in $L^p(0, 1)$. Then, we have
\begin{eqnarray*}
\| \mathcal{T_\eps(\varphi)}  - \varphi \chi \|_{L^p\big( (0,1)\times Y^*\big)} \leqslant \| \mathcal{T_\eps(\varphi)}  -   \mathcal{T_\eps}(\varphi_k)  \|_{L^p\big( (0,1)\times Y^*\big)}\\
+ \| \mathcal{T_\eps}(\varphi_k)  - \varphi_k \chi \|_{L^p\big( (0,1)\times Y^*\big)} + \|\varphi_k \chi   - \varphi \chi \|_{L^p\big( (0,1)\times Y^*\big)}
\end{eqnarray*}
from which the result is straightforward.
\end{proof}

As we mentioned at the end of Section 2, see Remark \ref{important}, we need to establish a link between test functions in $R^\eps$ and test functions in $W$.  
Now, we study the convergence of these functions.

\begin{proposition}\label{convergence test}
Let $\psi=\psi(x,y_1,y_2)$ be a $C^\infty_\#(W)$ function.
We define the following function $\psi^{\eps} \in C^{\infty}(\overline {R^\eps})$ by
$$\psi^\eps(x, y) = \psi\big(x, \frac{x}{\eps}, \frac{y}{\eps}) \quad \forall (x, y) \in R^\eps.$$
Then if $\mathcal{T}_\eps$ is the unfolding operator associated to the $l(x)$-partition
$$ \mathcal{T_\eps(\psi^\eps)}  \longrightarrow \psi \chi \quad \hbox{s}-L^p\big( (0,1)\times Y^*\big), \forall 1\leq p < \infty.$$
\end{proposition}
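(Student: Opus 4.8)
The plan is to establish the convergence pointwise almost everywhere and then upgrade it to strong $L^p$ convergence via the dominated convergence theorem, exploiting that $(0,1)\times Y^*$ has finite measure and that $\psi$, being continuous and periodic, is bounded. First I would invoke property v) of Proposition \ref{properties}, which gives the explicit expression
$$\mathcal{T_\eps(\psi^\eps)}(x,y_1,y_2) = \widetilde{\psi}\Big([x]_\eps + \Gamma_{[x]_\eps} y_1, \frac{[x]_\eps + \Gamma_{[x]_\eps} y_1}{\eps}, y_2\Big)\, \chi^\eps\big([x]_\eps + \Gamma_{[x]_\eps} y_1, \eps y_2\big)\, \chi_{(0,l([x]_\eps))}(y_1),$$
so that the statement reduces to controlling the $\psi$-factor and the two characteristic factors separately. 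Throughout I write $x^\ast = [x]_\eps + \Gamma_{[x]_\eps} y_1$.

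For the $\psi$-factor, I would use that $\psi(x^\ast,\cdot,y_2)$ is $l(x^\ast)$-periodic together with the fact that $\frac{[x]_\eps}{\eps\, l([x]_\eps)}$ is an integer, which holds by the very definition of the $l(x)$-partition. This allows the same reduction of the oscillatory middle argument as in the proof of Proposition \ref{Convergence-of-G}, namely
$$\psi\Big(x^\ast, \frac{x^\ast}{\eps}, y_2\Big) = \psi\Big(x^\ast, \frac{1}{\eps}\big([x]_\eps - l(x^\ast)\tfrac{[x]_\eps}{l([x]_\eps)} + \Gamma_{[x]_\eps} y_1\big), y_2\Big).$$
By the continuity of $\psi$ and the almost-everywhere convergences \eqref{1variable} and \eqref{2variable} already established for the $l(x)$-partition, the first argument tends to $x$ and the reduced middle argument tends to $y_1$, so this factor converges to $\psi(x,y_1,y_2)$ for a.e. $(x,y_1)\in W_0$.

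It then remains to handle the support. The product $\chi^\eps(x^\ast, \eps y_2)\,\chi_{(0,l([x]_\eps))}(y_1)$ is precisely $\mathcal{T_\eps}(\chi^\eps)$, the characteristic function of $W^\eps$. Repeating the pointwise argument behind Proposition \ref{domainunfolding} — using \eqref{convergence l} for $l([x]_\eps)\to l(x)$ and Proposition \ref{Convergence-of-G} for $G(x^\ast, x^\ast/\eps)\to G(x,y_1)$ — I would check that for a.e. $(x,y_1,y_2)$ the strict inequalities defining $W$ are eventually inherited by $W^\eps$, and that the reverse strict inequalities off $\overline{W}$ are eventually inherited as well, so that $\mathcal{T_\eps}(\chi^\eps)\to\chi$ a.e.; the boundary set $\{y_1=l(x)\}\cup\{y_2=G(x,y_1)\}$ has measure zero and is discarded. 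Combining the two limits gives $\mathcal{T_\eps(\psi^\eps)}\to \psi\chi$ pointwise a.e.: on $\{0<y_1<l(x),\,0<y_2<G(x,y_1)\}$ both support factors are eventually $1$ and the $\psi$-factor converges to $\psi(x,y_1,y_2)$, while on the complement the vanishing support factor kills the product, so that no convergence of the $\psi$-factor is even needed there.

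Finally, since $\|\mathcal{T_\eps(\psi^\eps)}\|_{L^\infty}\le \|\psi\|_{L^\infty(W)}$ by the $p=\infty$ case of property iv) of Proposition \ref{properties}, and $(0,1)\times Y^*$ is bounded, the Lebesgue dominated convergence theorem promotes the a.e. convergence to strong convergence in $L^p\big((0,1)\times Y^*\big)$ for every $1\le p<\infty$. The only genuinely delicate step is the periodicity reduction of the $\psi$-factor and the transfer of the limits \eqref{1variable} and \eqref{2variable} from $G$ to the test function $\psi$; since $\psi$ shares the $l(x)$-periodicity used for $G$, this is essentially identical to Proposition \ref{Convergence-of-G}, so the main obstacle is careful bookkeeping rather than a new idea. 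One could alternatively bypass the pointwise analysis by the density and triangle-inequality scheme of Proposition \ref{testconvergence}, approximating $\psi$ in $C^0_\#(W)$, but the direct dominated-convergence route appears shortest here.
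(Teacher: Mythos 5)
Your proposal is correct and takes essentially the same route as the paper's own proof: the explicit expression from Proposition \ref{properties}~v), the $l(x^\ast)$-periodicity reduction of the oscillatory middle argument (valid because $\frac{[x]_\eps}{\eps\, l([x]_\eps)}\in\N$ for the $l(x)$-partition) combined with the convergences (\ref{1variable}) and (\ref{2variable}) to obtain the paper's pointwise limit (\ref{uniform}), the convergence of the characteristic factors, and Lebesgue's dominated convergence theorem to conclude strong $L^p$ convergence. The only cosmetic difference is that you rederive the almost-everywhere convergence of $\mathcal{T_\eps}(\chi^\eps)$ pointwise, whereas the paper simply invokes Proposition \ref{domainunfolding}; both rest on the same ingredients, so this is a matter of bookkeeping rather than substance.
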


\begin{proof}
We have already shown that $\psi^{\eps}$ is well defined, see Proposition \ref{properties}, v). Then, we only have to prove the convergence.
\begin{eqnarray*}
\lefteqn{\| \mathcal{T_\eps(\psi^\eps)}  - \psi \chi \|^p_{L^p\big( (0,1)\times Y^*\big)}=} \\
&&=  \int_0^1\int_{Y*} \big|\psi^\eps\Big( [x]_\eps +\Gamma_{[x]_\eps} y_1, \eps y_2\Big)\mathcal{T_\eps(\chi^\eps)}( x, y_1, y_2) 
 - \psi(x, y_1, y_2)\chi(x, y_1, y_2)\big|^p dy_1 dy_2 dx\\
&&= \int_0^1\int_{Y*} \big| \psi\Big( [x]_\eps +\Gamma_{[x]_\eps} y_1,\frac{1}{\eps} \big([x]_\eps +\Gamma_{[x]_\eps} y_1\big),  y_2\Big)\mathcal{T_\eps(\chi^\eps)}
 - \psi\chi\big|^p dy_1 dy_2 dx.
\end{eqnarray*}
Since $\psi$ is continuous on $W$ and taking into account the convergences  (\ref{1variable}) and (\ref{2variable}) we have 
\begin{equation}\label{uniform}
\big| \psi\Big( [x]_\eps  +\Gamma_{[x]_\eps} y_1,\frac{1}{\eps} \big([x]_\eps - l\Big([x]_\eps +\Gamma_{[x]_\eps} y_1\Big)\frac{[x]_\eps}{l([x]_\eps)} +\Gamma_{[x]_\eps} y_1\big), y_2\Big)- \psi(x, y_1, y_2 )\big|  \stackrel{\eps\to 0}\longrightarrow 0 \; \hbox{ a.e on } W.
\end{equation}
Hence, due to (\ref{uniform}), Proposition \ref{domainunfolding} and by applying the Lebesgue's Dominated Convergence Theorem we obtain the result.
\end{proof}

So far we have only considered functions in $L^p(R^\eps)$. Now we study the behavior of sequences in $W^{1,p}(R^\eps)$. We will prove
a compactness result which allows us to obtain the limit of the unfolded derivatives. Before that, we need to state and prove a technical Lemma which we will use later on. 

\begin{lemma}\label{technical}
Assume  $1\leq p < \infty$. For any function $\theta (\cdot) \in W^{1,p}_0(0, 1)$ there exists a function  $\psi$ in $L^p\Big((0,1); W^{1,p}_{\#}\big(Y^*(x)\big)\Big)$ such that
$$\psi=\psi(x,y_2), \hbox { in } W$$
$$\psi(x, y_2) = 0 \quad \hbox{on} \quad \partial{W} \setminus B_1,$$
$$\frac{1}{l(x)} \int_{Y^*(x)} \psi(x, y_2) dy_1dy_2 = \theta (x) \quad \forall x \in (0,1),$$
$$\| \psi \|_{L^p(W)} \leqslant C \|\theta\|_{L^p(0,1)},$$
where $B_1$ is the following lateral boundary of $W$
$$B_1 = \left\{ (x, 0, y_2) : x\in (0,1)\; \hbox{and} \; 0 < y_2 < G(x, 0)\right\} \bigcup \left\{ (x, l(x), y_2) : x\in (0,1)\; \hbox{and} \; 0 < y_2 < G(x, 0)\right\}.$$
\end{lemma}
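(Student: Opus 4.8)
The plan is to construct $\psi$ by separation of variables, writing it as the product of $\theta(x)$ with a fixed profile in $y_2$ supported inside the uniform strip $0<y_2<G_0$. The key geometric fact to exploit is the uniform lower bound $G(x,y_1)\ge G_0$: every cell $Y^*(x)$ contains the rectangle $(0,l(x))\times(0,G_0)$, so a function supported in $\{y_2<G_0\}$ automatically stays away from the oscillating top boundary. Because we ask $\psi$ to be independent of $y_1$, the required $l(x)$-periodicity in $y_1$ holds trivially, and the only real content is matching the prescribed cell average while keeping the boundary and norm constraints.

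Concretely, I would first fix once and for all a profile $\eta\in C_c^\infty\big((0,G_0)\big)$ normalized by $\int_0^{G_0}\eta(s)\,ds=1$, and then set
$$\psi(x,y_2):=\theta(x)\,\eta(y_2),\qquad (x,y_1,y_2)\in W,$$
extended by $0$ for $y_2\ge G_0$. The average condition is then immediate: since $\operatorname{supp}\eta\subset(0,G_0)$ and $G(x,y_1)\ge G_0$, one has $\int_0^{G(x,y_1)}\eta(y_2)\,dy_2=1$ for every $y_1$, whence
$$\frac{1}{l(x)}\int_{Y^*(x)}\psi\,dy_1dy_2=\frac{1}{l(x)}\int_0^{l(x)}\theta(x)\,dy_1=\theta(x).$$
For the boundary condition, $\psi$ vanishes on the bottom face (since $\eta(0)=0$) and on the top surface $y_2=G(x,y_1)\ge G_0$ (since $\operatorname{supp}\eta\subset(0,G_0)$), and it vanishes on the two end faces $x\in\{0,1\}$ because $\theta(0)=\theta(1)=0$ for $\theta\in W^{1,p}_0(0,1)$; on the lateral faces $B_1$ it need not vanish, which is exactly what is allowed. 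Finally, using $l(x)\le l_1$,
$$\|\psi\|_{L^p(W)}^p=\int_0^1|\theta(x)|^p\,l(x)\Big(\int_0^{G_0}|\eta|^p\,ds\Big)dx\le l_1\,\|\eta\|_{L^p(0,G_0)}^p\,\|\theta\|_{L^p(0,1)}^p,$$
giving the claimed bound with $C=(l_1)^{1/p}\|\eta\|_{L^p(0,G_0)}$.

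I do not expect a serious obstacle here, since the construction is essentially explicit. The one point requiring care is the membership $\psi\in L^p\big((0,1);W^{1,p}_\#(Y^*(x))\big)$: one checks $\partial_{y_1}\psi\equiv0$ and $\partial_{y_2}\psi=\theta(x)\eta'(y_2)\in L^p(W)$, so that $\psi(x,\cdot)\in W^{1,p}_\#(Y^*(x))$ for a.e.\ $x$ with $x\mapsto\psi(x,\cdot)$ measurable, the periodicity being automatic from the $y_1$-independence. The decisive structural ingredient throughout is the uniform bound $G\ge G_0$, which is precisely what lets a single $y_1$-independent profile work simultaneously for all the variable cells $Y^*(x)$.
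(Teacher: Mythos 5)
Your proof is correct and takes essentially the same route as the paper: both construct $\psi$ by separation of variables as $\theta(x)$ times a fixed, $y_1$-independent profile in $y_2$ supported in $(0,G_0)$ with normalized mass, relying precisely on the uniform bound $G\ge G_0$ so that one profile works in every cell $Y^*(x)$. The only cosmetic difference is that the paper produces its profile as the explicit solution $v(y_2)=-\tfrac{y_2^2}{2}+\tfrac{G_0}{2}y_2$ of an auxiliary problem on $Y^*_0=(0,l_0)\times(0,G_0)$, extended by zero and normalized, whereas you take an arbitrary normalized bump $\eta\in C_c^\infty\big((0,G_0)\big)$, which is if anything slightly cleaner.
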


\begin{proof}
Let us consider the cell $Y^*_0 = (0, l_0) \times (0, G_0) \subset Y^*(x), \; \forall x \in (0,1)$. We define the following auxiliary problem:
\begin{equation*} 
\left\{
\begin{gathered}
 - \frac{\partial^2 v}{{\partial y_2}^2}  = 1
\quad \textrm{ in } Y^*_0 \\
\frac{\partial v}{\partial y_1}  = 0
\quad \textrm{ in } Y^*_0\\
v = 0 \quad \textrm{on } A_1\cup A_2
\end{gathered}
\right.
\end{equation*}
where $A_1$ is the upper boundary and $A_2$ is the lower boundary of $Y^*_0.$

This problem admits an unique, nonzero solution that we can obtain explicitly:
$$ v(y_2) = - \frac{y_2^2}{2} + \frac{G_0}{2}y_2 \quad \forall y_2 \in (0, G_0).$$

Then we define the function $\psi$ by:
$$\psi(x, y_1, y_2) = \frac{l_0}{\int_{Y^*_0 }\Big( \frac{\partial v}{\partial y_2}\Big)^2 dy_1dy_2}\; \widetilde{v} (y_2) \theta(x) \quad \forall (x, y_1, y_2) \in W.$$
Since $\int_{Y^*_0 }\Big( \frac{\partial v}{\partial y_2}\Big)^2 dy_1 dy_2 = \int_{Y^*_0 } v\, dy_1 dy_2$,  it is easy to see  that $\psi$ satisfies all the properties of the Lemma.
\end{proof}

We are now in position to state the main result of this section.

\begin{theorem}\label{convergence prop}
Let $\varphi^\eps\in W^{1,p}(R^\eps)$ for $1\leq p<\infty$, with $|||\varphi^\eps|||_{W^{1,p}( R^\eps)} =  \eps^{-1/p} \|\varphi^\eps\|_{W^{1,p}(R^\eps)}$ uniformly bounded. Then, if $\mathcal{T}_\eps$ is the unfolding operator associated to the $l(x)$-partition
\begin{enumerate}
\item[i)] There exists a function $\varphi$ in $W^{1,p}(0,1)$ such that,  up to subsequences:
$$\mathcal{T_\eps(\varphi^\eps)}\rightharpoonup  \varphi \chi \quad \hbox{w}-L^p\big( (0,1)\times Y^*\big).$$
\item[ii)] There exists a function $\varphi_1$ in $L^p\Big((0,1); W^{1,p}_{\#}(Y^*(x))\Big)$ such that, up to subsequences:

\begin{eqnarray*}
\mathcal{T_\eps}\Big(\frac{\partial\varphi^\eps}{\partial x}\Big)\rightharpoonup \xi_0 (x, y_1, y_2)= 
 \left\{ 
\begin{array}{ll}
 \frac{\partial\varphi}{\partial x}(x) + l(x)\frac{\partial\varphi_1}{\partial y_1}(x, y_1, y_2) \quad \hbox{for} \quad (x, y_1, y_2) \in W \\
0 \quad \quad \hbox{for} \quad  (x, y_1, y_2) \in (0, 1) \times Y^* \setminus W.
\end{array}
\right.
\end{eqnarray*}
\begin{eqnarray*}
\mathcal{T_\eps}\Big(\frac{\partial\varphi^\eps}{\partial y}\Big)\rightharpoonup \xi_1 (x, y_1, y_2)= 
 \left\{ 
\begin{array}{ll}
l(x) \frac{\partial \varphi_1}{\partial y_2} (x, y_1, y_2) \quad \hbox{for} \quad (x, y_1, y_2) \in W \qquad \qquad\\
0 \quad \quad \hbox{for} \quad  (x, y_1, y_2) \in (0, 1) \times Y^* \setminus  W.
\end{array}
\right.
\end{eqnarray*}
\end{enumerate}
\end{theorem}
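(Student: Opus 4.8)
The plan is to adapt the unfolding compactness scheme of \cite{CDG1,CDG2} to the $\eps$-dependent support $W^\eps$. Writing $\Phi^\eps:=\mathcal{T}_\eps(\varphi^\eps)$, the starting point is that on each interval $[x_k^\eps,x_{k+1}^\eps)$ both $[x]_\eps$ and $\Gamma_{[x]_\eps}$ are constant, so inside the support \eqref{W-eps} the chain rule gives the two scaling identities
$$\frac{\partial \Phi^\eps}{\partial y_1}=\Gamma_{[x]_\eps}\,\mathcal{T}_\eps\Big(\frac{\partial \varphi^\eps}{\partial x}\Big),\qquad \frac{\partial \Phi^\eps}{\partial y_2}=\eps\,\mathcal{T}_\eps\Big(\frac{\partial \varphi^\eps}{\partial y}\Big).$$
By Proposition \ref{properties}(iv) and Remark \ref{norm}, the bound on $|||\varphi^\eps|||_{W^{1,p}(R^\eps)}$ makes $\Phi^\eps$, $\mathcal{T}_\eps(\partial_x\varphi^\eps)$ and $\mathcal{T}_\eps(\partial_y\varphi^\eps)$ bounded in $L^p\big((0,1)\times Y^*\big)$, so along a subsequence they converge weakly to some $\Phi_0,\xi_0,\xi_1$ (for $p=1$ one adds the usual equi-integrability remark). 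Since all three are supported in $W^\eps$ and $\mathcal{T}_\eps(\chi^\eps)\to\chi$ strongly by Proposition \ref{domainunfolding}, the limits are supported in $W$.

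For part (i), the bound $\Gamma_{[x]_\eps}=O(\eps)$ coming from \eqref{limit gamma} together with the identities above shows that $\partial_{y_1}\Phi^\eps$ and $\partial_{y_2}\Phi^\eps$ tend to $0$ in $L^p$; passing to the limit against smooth test functions yields $\partial_{y_1}\Phi_0=\partial_{y_2}\Phi_0=0$ on $W$, hence $\Phi_0=\varphi(x)\chi$ with $\varphi\in L^p(0,1)$. To upgrade this to $\varphi\in W^{1,p}(0,1)$ I would use Lemma \ref{technical}: given $\theta\in\mathcal{D}(0,1)$ I take the associated function $\psi(x,y_2)$, which is independent of $y_1$, satisfies $\tfrac{1}{l(x)}\int_{Y^*(x)}\psi=\theta$ and, crucially, vanishes on the top and bottom faces of $W$. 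Setting $\psi^\eps(x,y)=\psi(x,y/\eps)$, this last property makes $\psi^\eps$ vanish on the oscillating boundary $y=\eps G(x,x/\eps)$, so integrating $\tfrac1\eps\int_{R^\eps}\partial_x\varphi^\eps\,\psi^\eps$ by parts in $x$ produces no boundary term (the lateral faces are killed by $\theta\in\mathcal{D}(0,1)$). Unfolding both integrals with the u.c.i. of Proposition \ref{properties}(iii), using $\mathcal{T}_\eps(\psi^\eps)\to\psi\chi$ as in Proposition \ref{convergence test} and $\tfrac{1}{l([x]_\eps)}\to\tfrac{1}{l(x)}$ from \eqref{convergence l}, and letting $\eps\to0$, I would obtain that $\theta\mapsto\int_0^1\varphi\,\theta'$ is bounded by $C\|\xi_0\|_{L^p}\|\theta\|_{L^{p'}}$, which gives $\varphi\in W^{1,p}(0,1)$.

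For part (ii) I would introduce the rescaled fluctuation $\zeta^\eps:=\tfrac1\eps\big(\Phi^\eps-\mathcal{M}_\eps(\Phi^\eps)\big)$, where $\mathcal{M}_\eps$ is the local average over the cell. The scaling identities and $\Gamma_{[x]_\eps}=O(\eps)$ give $\|\nabla_{(y_1,y_2)}\Phi^\eps\|_{L^p}=O(\eps)$, so by the Poincar\'e--Wirtinger inequality on the cells $Y^*(x)$ (uniform because $l_0\le l\le l_1$ and $G_0\le G\le G_1$) the sequence $\zeta^\eps$ is bounded in $L^p\big((0,1);W^{1,p}(Y^*(x))\big)$; extract a weak limit. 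Differentiating $\zeta^\eps$ and passing to the limit in $\partial_{y_1}\zeta^\eps=\tfrac1\eps\Gamma_{[x]_\eps}\mathcal{T}_\eps(\partial_x\varphi^\eps)$ and $\partial_{y_2}\zeta^\eps=\mathcal{T}_\eps(\partial_y\varphi^\eps)$, with the pointwise limit $\tfrac1\eps\Gamma_{[x]_\eps}\to\big(1-\tfrac{x}{l(x)}l'(x)\big)^{-1}$ from \eqref{limit gamma}, shows that the weak limit splits into an affine-in-$y_1$ part (whose slope reproduces the macroscopic derivative $\partial_x\varphi$) and an $l(x)$-periodic part $\varphi_1\in L^p\big((0,1);W^{1,p}_\#(Y^*(x))\big)$. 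Collecting the geometric factors produced by \eqref{limit gamma} then yields the asserted decompositions $\xi_0=\partial_x\varphi+l(x)\partial_{y_1}\varphi_1$ and $\xi_1=l(x)\partial_{y_2}\varphi_1$ on $W$, and $0$ off $W$.

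The main obstacle is exactly the one flagged after Definition \ref{unfolding}: $\Phi^\eps$ does not inherit $(y_1,y_2)$-regularity, since the extension by zero creates jumps across the oscillating boundary $\partial W^\eps$, so the scaling identities and the Poincar\'e estimate hold only on the moving support $W^\eps$ and not on the fixed rectangle $(0,1)\times Y^*$. Controlling these boundary jumps while simultaneously using the convergence $W^\eps\to W$ (the approach inspired by \cite{AL}), rather than the weak-compactness extraction itself, is the delicate technical core of the proof.
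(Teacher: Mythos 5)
Your overall architecture (weak compactness from Proposition \ref{properties}(iv), support identification via Proposition \ref{domainunfolding}, and the $W^{1,p}$-upgrade through Lemma \ref{technical}) matches the paper in part (i), but the core of your part (ii) contains a genuine gap. Your fluctuation argument rests on the claim that $\Gamma_{[x]_\eps}=O(\eps)$ uniformly, so that $\|\nabla_{(y_1,y_2)}\Phi^\eps\|_{L^p}=O(\eps)$ and $\zeta^\eps=\frac1\eps\big(\Phi^\eps-\mathcal{M}_\eps(\Phi^\eps)\big)$ is bounded in $L^p\big((0,1);W^{1,p}(Y^*(x))\big)$. But \eqref{limit gamma} is only a pointwise a.e.\ statement with an $x$-dependent constant: the limit $\big(1-\frac{x}{l(x)}l'(x)\big)^{-1}$ is finite a.e.\ yet unbounded as $x$ approaches the critical set $A$ allowed by hypothesis {\bf (H)}. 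Near a nondegenerate critical point of $h(x)=x/l(x)$ the spacing $x_{k+1}^\eps-x_k^\eps$ of the $l(x)$-partition is of order $\sqrt\eps$, not $\eps$, so $\frac1\eps\Gamma_{[x]_\eps}\sim\eps^{-1/2}$ on those slabs; since the mass of $|\mathcal{T}_\eps(\partial_x\varphi^\eps)|^p$ there need not be small, $\partial_{y_1}\zeta^\eps=\frac{\Gamma_{[x]_\eps}}{\eps}\mathcal{T}_\eps(\partial_x\varphi^\eps)$ is not bounded in $L^p$, and the extraction of a weak limit of $\zeta^\eps$ collapses exactly in the degenerate regime (Figures 2 and 3) that the theorem is designed to cover. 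A second unproved point in the same step is the $l(x)$-periodicity of $\varphi_1$: in the classical scheme this comes from comparing neighboring cells, which here have different lengths $l(x_k^\eps)\neq l(x_{k+1}^\eps)$, so new error terms must be controlled; you assert the splitting into an affine-in-$y_1$ part plus a periodic part without doing this. The paper itself remarks that the route via the scale-splitting operators $\mathcal{Q}_\eps$, $\mathcal{R}_\eps$ of \cite{CDG} (essentially your plan) is possible but ``more involved''; what it actually does is dual: it never differentiates $\Phi^\eps$, but tests $\nabla\varphi^\eps$ in $R^\eps$ against oscillating divergence-free fields $\Psi^\eps$ built from $\Psi\in\big[C^\infty_\#(W)\big]^2$ with $\mathrm{div}_{y_1y_2}\Psi=0$ and $\Psi\cdot n=0$ on $\partial_{sup}Y^*(x)\cup\partial_{inf}Y^*(x)$, integrates by parts in the thin domain (where $\varphi^\eps$ is genuinely $W^{1,p}$), shows the boundary term on $\partial_{sup}R^\eps$ vanishes as in \eqref{boundary}, unfolds via the u.c.i., and then obtains $\varphi_1$ from the orthogonality relation by the Helmholtz decomposition. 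This sidesteps simultaneously the jump measures on $\partial W^\eps$ and the non-uniform factor $\Gamma_{[x]_\eps}/\eps$.

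A smaller instance of the same issue affects your part (i): the distributional derivatives $\partial_{y_1}\Phi^\eps$, $\partial_{y_2}\Phi^\eps$ on the fixed rectangle include surface measures on $\partial W^\eps$ coming from the extension by zero, and these do not vanish; restricting to test functions compactly supported in $W$ does not immediately fix this because $W\not\subset W^\eps$ (the convergence of Proposition \ref{domainunfolding} is in measure, and \eqref{1variable}--\eqref{2variable} are a.e., not locally uniform near $A$). This step is repairable by exhausting $W$ away from the null set over $A$, but the paper again avoids it entirely: independence of $\hat\varphi$ from $(y_1,y_2)$ is proved by unfolding the identity $\int_{R^\eps}\nabla\varphi^\eps\cdot\Psi^\eps=-\int_{R^\eps}\varphi^\eps\big(\theta_1^\eps+\frac1\eps\theta_2^\eps+\frac1\eps\theta_3^\eps\big)$ with $\Psi\in[\mathcal{D}(W)]^2$ and passing to the limit, so that only $\mathcal{T}_\eps(\varphi^\eps)$ itself, never its $(y_1,y_2)$-derivatives, appears. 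Your $W^{1,p}$-upgrade via Lemma \ref{technical} is the paper's own argument and is correct as you state it.
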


\begin{proof}
i)  Since the norm $|||\varphi^\eps|||_{L^p(R^\eps)}$ is uniformly bounded in $\eps$,  using Proposition \ref{properties}, iv), we deduce that $\|\mathcal{T_\eps(\varphi)}\|_{L^p\big( (0,1)\times Y^*\big)}$ is also uniformly bounded and, therefore,  we can extract a subsequence of $\mathcal{T_\eps(\varphi^\eps)}$, still denoted by $\mathcal{T_\eps(\varphi^\eps)}$ such that
$$\mathcal{T_\eps(\varphi^\eps)}\rightharpoonup  \hat{\varphi} \quad \hbox{w}-L^p\big( (0,1)\times Y^*\big).$$
To check that $ \hat{\varphi}$ is zero outside $W$ is obvious from Definition \ref{unfolding} and Proposition \ref{domainunfolding}. 

\par\medskip  Now we prove that $\hat{\varphi}$ does not depend on $(y_1, y_2)$ in $W$. For this, let $\Psi = (\psi_1, \psi_2)$ be a function in $[\mathcal{D}(W)]^2$. We also denote by $\Psi$ the extension by zero and notice that it belongs to $[\mathcal{D}\big((0, 1)\times Y^*\big)]^2.$ Using Proposition \ref{properties}, v),  we can define $\Psi^\eps\equiv  (\psi{_1}^\eps, \psi{_2}^\eps) \in [\mathcal{D}(R^\eps)]^2$, where $$\psi{_i}^\eps(x, y) = \psi_i(x, \frac{x}{\eps}, \frac{y}{\eps}) \;\; \forall \,(x, y) \in R^ \eps, \quad i=1, 2.$$
In addition, set

$$\theta{_1}^\eps(x,y) =\frac{\partial\psi_1}{\partial x}(x, \frac{x}{\eps}, \frac{y}{\eps}), \; \theta{_2}^\eps(x,y) =\frac{\partial\psi_1}{\partial y_1}(x, \frac{x}{\eps}, \frac{y}{\eps}) \hbox{ and } \theta{_3}^\eps(x,y) =\frac{\partial\psi_2}{\partial y_2}(x, \frac{x}{\eps}, \frac{y}{\eps}).$$
Integrating by parts, we have
$$\int_{R^\eps} \nabla\varphi^\eps(x, y)\cdot \Psi (x, \frac{x}{\eps},\frac{ y}{\eps}) dx dy = - \int_{R^\eps} \varphi^\eps (x, y) \Big(\theta{_1}^\eps(x,y) + \frac{1}{\eps}\theta{_2}^\eps(x,y) + \frac{1}{\eps}\theta{_3}^\eps(x,y)  \Big)dx dy.$$
Then, by the criterion for integrals, Proposition \ref{properties}, we obtain
\begin{eqnarray*}
 \lefteqn{ \int_{(0, 1) \times Y^*} \frac{\eps}{l([x]_\eps)} \left\{\mathcal{T_\eps}\Big(\frac{\partial\varphi^\eps}{\partial x}\Big)\mathcal{T_\eps} (\psi^\eps_1) + \mathcal{T_\eps}\Big(\frac{\partial\varphi^\eps}{\partial y}\Big)\mathcal{T_\eps}(\psi^\eps_2)
 \right\} dx dy_1 dy_2 }\\
&=  &-\int_{(0, 1) \times Y^*} \frac{1}{l([x]_\eps)} \left\{  \eps \mathcal{T_\eps(\varphi^\eps)}
\mathcal{T_\eps}(\theta{_1}^\eps)+ \mathcal{T_\eps(\varphi^\eps)}
\mathcal{T_\eps}(\theta{_2}^\eps) +\mathcal{T_\eps(\varphi^\eps)}
\mathcal{T_\eps}(\theta{_3}^\eps)\right\} dx dy_1 dy_2.
\end{eqnarray*}
Passing to the limit in both terms with the help of Proposition  \ref{domainunfolding} and Proposition \ref{convergence test}  we get
$$0 =  - \int_{W}  \frac{1}{l(x)}\hat{\varphi}(x, y_1, y_2) \hbox{div}_{y_1y_2}\Psi(x, y_1, y_2) dx dy_1 dy_2 \quad \forall \Psi \in [\mathcal{D} (W)]^2.$$
This implies that $\hat{\varphi}$ does not depend on $(y_1, y_2)$ in $W$. Then we can conclude that there exists a function $\varphi \in L^p(0, 1)$ such that:
$$\hat{\varphi} (x, y_1, y_2) = \varphi (x) \chi (x, y_1, y_2) \quad \forall (x, y_1, y_2) \in (0, 1) \times Y^*.$$
\par\medskip
We see now that $\varphi \in W^{1,p}(0,1)$. For this, for any function $\theta(x) \in \mathcal{D}(0, 1)$ let $\psi$ be the function in $W^{1,q}(W)$, $\frac{1}{p}+\frac{1}{q} = 1$, given by  Lemma \ref{technical}, that is
$$\frac{\partial \psi}{\partial y_1}  = 0
\quad \textrm{ in } W$$
$$\psi(x, y_2) = 0 \quad \hbox{on} \quad \partial{W} - B_1,$$
$$\frac{1}{l(x)} \int_{Y^*(x)} \psi(x, y_2) dy_1dy_2 = \theta (x) \quad \forall x \in (0,1),$$
$$\| \psi \|_{L^q(W)} \leqslant C \|\theta\|_{L^q(0,1)},$$
where $B1$ is the lateral boundary of W defined in Lemma \ref{technical}.
Note that the extension by zero  of $\psi$ in the direction $y_2$  belongs to the space $W^{1,q}((0,1)\times Y^*\big)$. We define the sequence $\{\psi^\eps\}$ by
$$\psi^\eps(x, y) = \psi(x, \frac{y}{\eps}) \quad \forall (x, y) \in R^\eps.$$
Integrating by parts, we obtain
$$\int_{R^\eps} \frac{\partial\varphi^\eps}{\partial x} (x, y) \psi^\eps (x, y) dx dy = - \int_{R^\eps} \varphi^\eps (x, y) \frac{\partial\psi^\eps}{\partial x}(x, y)dx dy.$$
Then, by the unfolding criterion for integrals, we have
\begin{equation}\label{previous}
\int_{(0, 1) \times Y^*} \frac{1}{l([x]_\eps)}\mathcal{T_\eps}\Big(\frac{\partial\varphi^\eps}{\partial x}\Big) \mathcal{T_\eps(\psi^\eps)} dx dy_1 dy_2
= -\int_{(0, 1) \times Y^*} \frac{1}{l([x]_\eps)} \mathcal{T_\eps(\varphi^\eps)}
\mathcal{T_\eps}\Big(\frac{\partial\psi^\eps}{\partial x}\Big) dx dy_1 dy_2.
\end{equation}
 We can pass to the limit in (\ref{previous}) using Proposition \ref{domainunfolding} , Proposition \ref{convergence test} and assuming that $\mathcal{T_\eps}\Big(\frac{\partial\varphi^\eps}{\partial x}\Big)\rightharpoonup \xi_0 (x, y_1, y_2)$ w-$L^p\big( (0,1) \times Y^*\big)$, it will be seen in the second part of Theorem. Thus, we get
\begin{equation}\label{derivative}
\int_{W} \frac{1}{l(x)}\xi_0  (x, y_1, y_2) \psi(x, y_2) dx dy_1 dy_2 = - \int_{W} \frac{1}{l(x)}\varphi(x) \frac{\partial\psi}{\partial x} (x, y_2) dx dy_1 dy_2.
\end{equation}
By using Lemma \ref{technical} above, the right-hand side of (\ref{derivative}) becomes $\int_{(0,1)} \varphi(x)\frac{\partial\theta}{\partial x} (x) dx,$ while the left-hand side is a linear continuous form in
$\theta(x) \in \mathcal{D}(0, 1).$ This implies that $\varphi \in W^{1,p}(0, 1).$\\

\par\medskip 
ii)  Since $|||\varphi^\eps|||_{W^{1,p}( R^\eps)} =  \eps^{-1/p}\|\varphi^\eps\|_{W^{1,p}(R^\eps)}$  is uniformly bounded, using property iv) in Proposition \ref{properties} we can extract two subsequences of $\mathcal{T_\eps}\Big(\frac{\partial\varphi^\eps}{\partial x}\Big)$ and $\mathcal{T_\eps}\Big(\frac{\partial\varphi^\eps}{\partial y}\Big)$, still denoted by  
$\mathcal{T_\eps}\Big(\frac{\partial\varphi^\eps}{\partial x}\Big)$ and $\mathcal{T_\eps}\Big(\frac{\partial\varphi^\eps}{\partial y}\Big)$ such that
$$\mathcal{T_\eps}\Big(\frac{\partial\varphi^\eps}{\partial x}\Big)\rightharpoonup \xi_0 (x, y_1, y_2) \quad \hbox{w}-L^p\big((0, 1) \times Y^*\big), $$
$$\mathcal{T_\eps}\Big(\frac{\partial\varphi^\eps}{\partial y}\Big)\rightharpoonup \xi_1 (x, y_1, y_2)  \quad \hbox{w}-L^p\big((0, 1) \times Y^*\big),$$
for certain $\xi_0$ and $\xi_1$ in $L^p\big((0, 1) \times Y^*\big).$

First note that $\xi_0 = \xi_1 = 0$ in $(0, 1) \times Y^* \backslash W$ follows from the definition of unfolding operator and Proposition \ref{domainunfolding}.\\

In order to find the precise form of $\xi_0$ and $\xi_1$ in $W$ we argue as follows. \\

Let $\Psi\equiv  (\psi_1, \psi_2) \in \big[C^\infty_\#(W)\big]^2$ be a function satisfying
$$\hbox{div}_{y_1y_2}\Psi = 0$$  
$$\Psi(x, y_1, y_2) \cdot n_{(y_1,y_2)}(x) = 0\; \hbox{on}\; \partial_{inf}Y^*(x) \cup \partial_{sup}Y^*(x),$$ 
$$\psi_1(0, \cdot, \cdot) = \psi_1(1, \cdot, \cdot)=0,$$
where $n_{(y_1,y_2)}(x)$ is the outward normal to $\partial Y^*(x)$ for every $x \in (0,1)$.

 From Proposition \ref{convergence test} we can define  
$\Psi^\eps\equiv  (\psi{_1}^\eps, \psi{_2}^\eps) \in [W^{1,p}(R^ \eps)]^2$, where $$\psi{_i}^\eps(x, y) = \psi_i(x, \frac{x}{\eps}, \frac{y}{\eps}) \;\; \forall \,(x, y) \in R^ \eps, \quad i=1, 2.$$
Then, integrating by parts, we have
\begin{align} \label{test}
&\int_{R^\eps} \Big[\nabla\varphi^\eps(x, y)- \nabla\varphi(x)\Big] \cdot \Psi^\eps(x, y) \;dx dy-\int_{\partial_{sup} R^\eps} \nu^\epsilon \cdot \Psi^\eps (\varphi^\eps - \varphi) \, dS= \nonumber\\
&=- \int_{R^\eps} \Big[\varphi^\eps(x, y) - \varphi(x)\Big] \hbox{div}_ {(x, y)}\Psi^\eps(x, y)\;dx dy \nonumber\\
&=  - \int_{R^\eps} \left\{\Big[\varphi^\eps (x, y) - \varphi(x)\Big] \frac{\partial\psi_1}{\partial x}(x, \frac{x}{\eps}, \frac{y}{\eps}) + \frac{1}{\eps} \Big[\varphi^\eps(x, y)- \varphi(x)\Big] \hbox{div}_{(y_1, y_2)}\Psi (x, \frac{x}{\eps}, \frac{y}{\eps}) \right\}dx dy \nonumber\\
&= - \int_{R^\eps} \Big[\varphi^\eps (x, y) - \varphi(x)\Big] \frac{\partial\psi_1}{\partial x}(x, \frac{x}{\eps}, \frac{y}{\eps})dx dy.
\end{align}
Applying the unfolding criterion for integrals we can write (\ref{test}) as:
\begin{align}\label{testunfold}
&\int_{(0, 1) \times Y^*} \frac{1}{l([x]_\eps)}\left\{\Big[\mathcal{T_\eps}\Big(\frac{\partial\varphi^\eps}{\partial x}\Big) - \mathcal{T_\eps}\Big(\frac{\partial\varphi}{\partial x}\Big)\Big] \mathcal{T_\eps}(\psi_1) +  \mathcal{T_\eps}\Big(\frac{\partial\varphi^\eps}{\partial y}\Big) \mathcal{T_\eps}(\psi_2)\right\}
dx dy_1 dy_2\nonumber\\
&=  -\int_{(0, 1) \times Y^*}  \frac{1}{l([x]_\eps)}\Big[\mathcal{T_\eps(\varphi^\eps)}- \mathcal{T_\eps(\varphi)}\Big]
\mathcal{T_\eps}\Big(\frac{\partial\psi_1}{\partial x}\Big) dx dy_1 dy_2+\frac{1}{\eps}\int_{\partial_{sup} R^\eps} \nu^\epsilon \cdot \Psi^\eps (\varphi^\eps - \varphi) \, dS.
\end{align}

Now, we prove that the boundary term vanishes as $\eps$ tends to zero. The unit normal to $\partial_{sup} R^\eps$ is given by
$$\nu^\eps = \Big(\frac{-\eps G_x - G_y}{\sqrt{(\eps G_x + G_y)^2+1}}, \frac{1}{\sqrt{(\eps G_x + G_y)^2+1}}\Big),$$
where $\displaystyle{G_x= \frac{\partial G}{\partial x}}$ and $G_y =\displaystyle{ \frac{\partial G}{\partial y}}$.

Then, taking into account $\Psi(x, y_1, y_2) \cdot n_{(y_1,y_2)}(x) = 0\; \hbox{on}\;\partial_{sup}Y^*(x)$ we have 
$$\int_{\partial_{sup} R^\eps} \nu^\epsilon \cdot \Psi^\eps (\varphi^\eps - \varphi) \, dS =-\eps \int_{\partial_{sup} R^\eps} \frac{G_x}{\sqrt{(\eps G_x + G_y)^2+1}}\psi_1^\eps (\varphi^\eps - \varphi)\, dS $$
Therefore, since $\displaystyle{\Big|\Big|\Big|\frac{G_x}{\sqrt{(\eps G_x + G_y)^2+1}}\psi_1^\eps (\varphi^\eps - \varphi)\Big|\Big|\Big|_{W^{1,1}( R^\eps)}}$ is uniformly bounded we can conclude
\begin{equation}\label{boundary}
\lim_{\eps\to0}\frac{1}{\eps} \int_{\partial_{sup} R^\eps} \nu^\epsilon\Psi^\eps (\varphi^\eps - \varphi) \, dS=0.
\end{equation}
Passing to the limit in \eqref{testunfold} with the help of Proposition \ref{domainunfolding}, Proposition \ref{convergence test}, the convergence in i) and \eqref{boundary} we get
\begin{equation*} 
\int_{W} \frac{1}{l(x)}\left \{ \Big[\xi_0(x, y_1, y_2) - \frac{\partial\varphi}{\partial x}(x) \Big] \psi_1(x, y_1, y_2) +  \xi_1(x, y_1, y_2) \psi_2(x, y_1, y_2) \right\} dx dy_1 dy_2 = 0.
 \end{equation*}
 Hence, we obtain
 \begin{equation}
\int_{W}  \Big( \frac{1}{l(x)}\Big[\xi_0(x, y_1, y_2) - \frac{\partial\varphi}{\partial x}(x) \Big], \frac{1}{l(x)}\xi_1(x, y_1, y_2)\Big) \cdot \big( \psi_1, \psi_2\big)(x, y_1, y_2)
dx dy_1 dy_2 = 0.
 \end{equation}

The Helmholtz decomposition, see \cite{DJL}, yields that the orthogonal of divergence-free functions is exactly the gradients. Then, we can conclude that there exists a function
$\varphi_1 \in L^p\Big((0,1); W^{1,p}_{\#}\big(Y^*(x)\big)\Big)$ such that
$$\frac{1}{l(x)}\Big(\xi_0(x, y_1, y_2) -   \frac{\partial\varphi}{\partial x}(x)\Big) = \frac{\partial\varphi_1}{\partial y_1}(x, y_1, y_2) \quad \forall (x, y_1, y_2) \in W,$$
$$\frac{1}{l(x)}\xi_1(x, y_1, y_2) = \frac{\partial\varphi_1}{\partial y_2}(x, y_1, y_2)\quad \forall (x, y_1, y_2) \in W,$$
which ends the proof of the Theorem.

\end{proof}

\begin{remark} \label{compactness theorem}
As we wrote in the introduction, this Theorem is the main tool to obtain the homogenized limit problem. On one hand, it shows that the limit $\varphi$ lies in $W^{1,p}(0,1),$ which was not clear at all in view
of the a priori estimates because $\mathcal{T_\eps(\varphi^\eps)}$ is defined on a varying set. On the other hand, it allows us to relate the limit of the unfolded derivatives $\mathcal{T_\eps}\Big(\frac{\partial\varphi^\eps}{\partial x}\Big)$ and $\mathcal{T_\eps}\Big(\frac{\partial\varphi^\eps}{\partial y}\Big)$  with the weak derivative of $\varphi$. Note that the variable period plays a decisive role in the limit and,
as we will see in the next section, it enters into the limit equation.
\end{remark}

\begin{remark}
The proof provided for Theorem \ref{convergence prop} uses similar techniques as in \cite{AL}. It is also possible to obtain the same result by adapting the scale-splitting operators, $\mathcal{Q_\eps}$ and $\mathcal{R_\eps}$, introduced in \cite{CDG} to the new situation presented in this paper. However, the calculations using these operators are a little bit more involved. 
\end{remark}

\section{Homogenization of the Neumman problem}\label{Sec-Neumann}

In this section we return to the problem (\ref{OP0}) presented in the Introduction  and we show how the unfolding operator method adapted to this new situation allows to obtain the homogenized limit problem.  We will need the results from the previous Sections and in particular the convergence result from Theorem \ref{convergence prop}. Therefore, throughout this Section we will assume that the unfolding operator $\mathcal{T}_\eps$ is the one associated to the $l(x)$-partition. 

We start by recalling the problem:

\begin{equation} \label{OP1}
\left\{
\begin{gathered}
- \Delta u^\epsilon + u^\epsilon = f^\epsilon
\quad \textrm{ in } R^\epsilon \\
\frac{\partial u^\epsilon}{\partial \nu ^\epsilon} = 0
\quad \textrm{ on } \partial R^\epsilon
\end{gathered}
\right. 
\end{equation}
with $f^\epsilon \in L^2(R^\epsilon)$ and where $\nu^\epsilon = (\nu^\epsilon_1, \nu^\epsilon_2)$ is the unit outward normal to $\partial R^\epsilon$
and $\frac{\partial }{\partial \nu^\epsilon}$ is the outside normal derivative. The domain $R^\eps$ is a two dimensional thin domain which is given by
 $$R^\epsilon = \{ (x,y) \in \R^2 \; | \;  x \in (0,1), \, 0 < y < \epsilon \, G(x, x/\eps) \}$$
 where $G(\cdot, \cdot)$ is a function satisfying the assumptions established in Section 2.
From Lax-Milgran Theorem, we have that  problem (\ref{OP1}) has a unique solution for each $\epsilon > 0$. 
We are interested here in analyzing the behavior of the solutions as $\eps\to 0$.

The variational formulation of (\ref{OP1}) is
\begin{eqnarray} \label{WF}
\left\{
\begin{array}{lll}
\text{Find}\; u^\epsilon \in H^1(R^\epsilon) \; \text{such that}\\ \\
\displaystyle \int_{R^\epsilon} \Big\{ \nabla u^\eps \cdot \nabla \varphi
+ u^\epsilon \varphi \Big\} dx dy  =  \int_{R^\epsilon} f^\epsilon \varphi dx dy,\\ \\
\forall \varphi \in H^1(R^\epsilon). 
\end{array}
\right.
\end{eqnarray}

Now we are in condition to state and prove the homogenization result.
\begin{theorem}\label{limit problem}
Let $u^\eps$ be the solution of problem (\ref{OP1}). Assume that $f^\eps \in L^2(R^\eps)$ satisfies $||| f^\eps |||_{L^2(R^\epsilon)} \leq C$ with C independent of the parameter $\eps$ and, therefore, there  exists $\hat{f} \in L^2(W)$ such that, via subsequences, $\displaystyle{\mathcal{T_\eps}(f^\eps)\rightharpoonup  \hat{f} \chi \quad \hbox{weakly in }\; L^2\big( (0,1)\times Y^*\big)}$.
Then, there exist $u \in H^1(0, 1)$ and $u_1\in L^2\Big((0,1); H^1_{\#}\big(Y^*(x)\big)\Big)$ such that
\begin{eqnarray}
& &\displaystyle{\mathcal{T_\eps}(u^\eps)\rightharpoonup  \hat{u}  =  u \chi \quad \hbox{weakly in }\; L^2\big( (0,1)\times Y^*\big),}\label{solu}\\ 
& &\displaystyle{\mathcal{T_\eps}\Big(\frac{\partial u^\eps}{\partial x}\Big)\rightharpoonup \xi_0 (x, y_1, y_2)  = 
 \frac{\partial u}{\partial x}(x) +l(x)\frac{\partial u_1}{\partial y_1}(x, y_1, y_2) \quad  \hbox{weakly in }\; L^2(W),}\label{dev1} \\ 
 & &\displaystyle{\mathcal{T_\eps}\Big(\frac{\partial u^\eps}{\partial y}\Big)\rightharpoonup \xi_1 (x, y_1, y_2)
 =  l(x)\frac{\partial u_1}{\partial y_2}(x, y_1, y_2)\quad  \quad \hbox{weakly in }\; L^2(W),}\label{dev2}
\end{eqnarray}
and the pair $(u, u_1)$ is the unique solution of the problem

\begin{equation} \label{system homogenized}
\left\{
\begin{gathered}
 \forall \phi \in H^1(0, 1), \forall \psi \in L^2\Big((0,1); H^1_{\#}\big(Y^*(x)\big)\Big) \\
 \int_W \left\{ \Big( \frac{1}{l(x)}\frac{\partial u}{\partial x}(x) + \frac{\partial u_1}{\partial y_1}(x, y_1, y_2)\Big) \Big( \frac{\partial \phi}{\partial x}(x) + \frac{\partial \psi}{\partial y_1}(x, y_1, y_2)\Big)\right\} dxdy_1dy_2 \\
 + \int_W \left\{ \, \frac{\partial u_1}{\partial y_2}(x, y_1, y_2)\frac{\partial \psi}{\partial y_2}(x, y_1, y_2) + \frac{u(x) \phi(x)}{l(x)} \right\} dxdy_1dy_2 = \int_W \frac{\hat{f}(x,y_1,y_2) \phi(x)}{l(x)} dxdy_1dy_2. 
\end{gathered}
\right.
\end{equation}

Equivalently, $u \in H^1(0,1)$ is the unique weak solution of the following Neumann problem, it was obtained through the relation $ u_1 (x, y_1, y_2) = - X(x) (y_1, y_2) 
 \frac{1}{l(x)}\frac{\partial u}{\partial x}(x)$, 
\begin{equation} \label{homogenized problem}
\left\{
\begin{gathered}
- \big(r(x) u_x\big)_x + \frac{|Y^*(x)|}{l(x)}u = f_0, \quad x \in (0,1) \\
 u'(0) = u' (1) = 0
\end{gathered}
\right.
\end{equation}

where 

\begin{equation} \label{RPFL1}
\begin{gathered}
 f_0=  \frac{1}{l(x)}\int_{Y^*(x)} \hat{f}dy_1 dy_2,\\ 
\end{gathered}
\end{equation}
\begin{equation} \label{RPFL}
\begin{gathered}
r(x) =  \frac{1}{l(x)}\int_{Y^*(x)}\Big\{ 1 - \frac{\partial X(x)}{\partial y_1}(y_1,y_2) \Big\} dy_1 dy_2\\ 
\end{gathered}
\end{equation}
and $X(x)$ is the unique solution which is $l(x)$-periodic in the first variable, of the problem
\begin{equation} \label{AUXG}
\left\{
\begin{gathered}
- \Delta X(x)  =  0  \textrm{ in } Y^*(x)  \\
\frac{\partial X(x)}{\partial N}  =  0  \textrm{ on } B_2(x)  \\
\frac{\partial X(x)}{\partial N}  =  N_1(x) \textrm{ on } B_1(x)  \\
\int_{Y^*(x)} X(x) \; dy_1 dy_2  =  0  
\end{gathered}
\right.
\end{equation}
in the representative cell $Y^*(x)$ given by
\begin{equation} \label{CELLL1}
Y^*(x) = \{ (y_1,y_2) \in \R^2 \; : \; 0< y_1 < l(x), \quad 0 < y_2 < G(x,y_1) \}. 
\end{equation}
 $B_1(x)$ is the upper boundary and $B_2(x)$ is the lower
boundary of $\partial Y^*(x)$ for all $x \in I$.
\end{theorem}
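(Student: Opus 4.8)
The plan is to combine the energy estimate with the compactness theorem already at our disposal, and then to identify the limit by unfolding the variational formulation tested against suitable oscillating functions.

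First I would establish the a priori bound. Taking $\varphi=u^\eps$ in \eqref{WF} and using Cauchy--Schwarz gives $\|u^\eps\|_{H^1(R^\eps)}\le \|f^\eps\|_{L^2(R^\eps)}$, so after rescaling by $\eps^{-1/2}$ we get $|||u^\eps|||_{H^1(R^\eps)}\le |||f^\eps|||_{L^2(R^\eps)}\le C$. With this uniform bound in hand, Theorem \ref{convergence prop} applied with $p=2$ directly produces $u\in H^1(0,1)$ and $u_1\in L^2\big((0,1);H^1_{\#}(Y^*(x))\big)$ together with the three convergences \eqref{solu}--\eqref{dev2}, along a subsequence.

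The core step is to derive \eqref{system homogenized}. For $\phi\in\mathcal{D}(0,1)$ and $\psi\in C^\infty_\#(W)$ I would set $\varphi^\eps(x,y)=\phi(x)+\eps\,\psi(x,x/\eps,y/\eps)$, which by Proposition \ref{properties}, v) is an admissible test function in \eqref{WF}. Its gradient is $\partial_x\varphi^\eps=\phi'+(\partial_{y_1}\psi)(x,x/\eps,y/\eps)+\eps(\partial_x\psi)(x,x/\eps,y/\eps)$ and $\partial_y\varphi^\eps=(\partial_{y_2}\psi)(x,x/\eps,y/\eps)$. Dividing \eqref{WF} by $\eps$ and applying the unfolding criterion for integrals (Proposition \ref{properties}, iii) together with the multiplicativity $\mathcal{T_\eps}(\varphi\psi)=\mathcal{T_\eps(\varphi)}\mathcal{T_\eps(\psi)}$ transforms every integral over $R^\eps$ into an integral over $(0,1)\times Y^*$ weighted by $1/l([x]_\eps)$. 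Passing to the limit is then a matter of pairing weak against strong convergence: $\mathcal{T_\eps}(\partial_x u^\eps)\rightharpoonup\xi_0$, $\mathcal{T_\eps}(\partial_y u^\eps)\rightharpoonup\xi_1$, $\mathcal{T_\eps}(u^\eps)\rightharpoonup u\chi$ and $\mathcal{T_\eps}(f^\eps)\rightharpoonup\hat f\chi$ weakly, while by Propositions \ref{testconvergence} and \ref{convergence test} the unfolded test functions converge \emph{strongly} to $(\phi'+\partial_{y_1}\psi)\chi$, $(\partial_{y_2}\psi)\chi$ and $\phi\chi$, and $1/l([x]_\eps)\to 1/l(x)$ a.e.\ with domination (so it converges strongly against the bounded sequences). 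Substituting the explicit forms of $\xi_0,\xi_1$ yields \eqref{system homogenized} for smooth pairs. The main obstacle I anticipate here is the concluding density argument: extending the identity to all $\psi\in L^2\big((0,1);H^1_{\#}(Y^*(x))\big)$ requires the density of $C^\infty_\#(W)$ in that Banach-valued space over the \emph{varying} cells $Y^*(x)$, which has to be handled with some care since the standard periodic arguments do not transfer verbatim.

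Finally I would decouple the two-scale system. Choosing $\phi=0$ in \eqref{system homogenized} shows that for a.e.\ $x$ the slice $u_1(x,\cdot)$ is the weak solution of the cell problem with source proportional to $\tfrac1{l(x)}u_x(x)$; comparison with \eqref{AUXG} gives the representation $u_1(x,y_1,y_2)=-\tfrac1{l(x)}u_x(x)\,X(x)(y_1,y_2)$. Inserting this into \eqref{system homogenized} with $\psi=0$, using $\tfrac1{l}u_x+\partial_{y_1}u_1=\tfrac1{l}u_x\big(1-\partial_{y_1}X\big)$ and integrating out $(y_1,y_2)$, produces exactly the weak form of \eqref{homogenized problem} with $r(x)$ and $f_0$ as in \eqref{RPFL}, \eqref{RPFL1}. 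Uniqueness of $(u,u_1)$, with $u_1$ normalized by zero cell-average, follows from coercivity of the limit bilinear form; this rests on a Poincar\'e--Wirtinger inequality on each cell together with the strict positivity $r(x)>0$, equivalently $\min_v\int_{Y^*(x)}|(1,0)+\nabla_y v|^2>0$, which holds because $(1,0)$ is not the gradient of an $l(x)$-periodic function. Uniqueness of the limit then upgrades the subsequential convergence to convergence of the whole family, completing the proof. Besides the density point above, the coercivity estimate over the $x$-dependent family of cells is the other place where genuine care is needed.
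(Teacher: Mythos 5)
Your proposal follows the paper's own proof almost step for step: the same a priori estimate from testing with $u^\eps$, the same appeal to the compactness Theorem \ref{convergence prop} to produce $u$, $u_1$ and the convergences \eqref{solu}--\eqref{dev2}, the identification of the limit system by unfolding the variational formulation against $\phi(x)$ and $\eps\psi(x,x/\eps,y/\eps)$ (the paper tests with the two families separately and then sums the resulting equations; you combine them into the single test function $\phi+\eps\psi^\eps$, which is equivalent), the same decoupling $u_1=-\frac{1}{l(x)}u_x\,X(x)$ via the cell problem \eqref{AUXG}, and Lax--Milgram for the limit problem. Your extra remarks --- the density of $C^\infty_\#(W)$ in $L^2\big((0,1);H^1_{\#}(Y^*(x))\big)$ over the varying cells, the normalization of $u_1$ by zero cell average, and the strict positivity of $r(x)$ needed for coercivity --- correspond to points the paper also relies on but passes over quickly, so spelling them out is a refinement rather than a deviation.

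One step, as written, would fail: you take $\phi\in\mathcal{D}(0,1)$ and propose to conclude by density. Since $\mathcal{D}(0,1)$ is dense in $H^1_0(0,1)$ but not in $H^1(0,1)$, this route only yields \eqref{system homogenized} for $\phi\in H^1_0(0,1)$; that identifies the limit equation in the interior but loses precisely the information that produces the natural Neumann conditions $u'(0)=u'(1)=0$ in \eqref{homogenized problem}. The remedy is immediate and is what the paper does: nothing in your limit passage uses compact support of $\phi$ --- the test function $\phi+\eps\psi^\eps$ lies in $H^1(R^\eps)$ for any $\phi\in H^1(0,1)$, and Proposition \ref{testconvergence} gives strong convergence of $\mathcal{T}_\eps(\phi)$ and $\mathcal{T}_\eps(\phi')$ for arbitrary $L^2(0,1)$ data --- so run the argument with $\phi\in H^1(0,1)$ from the start, and reserve the density argument for the $\psi$ slot alone, exactly where you anticipated it.
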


\begin{remark}
Observe that the limit equation (\ref{homogenized problem}) reflects the geometry of the thin domain. As it is reasonable to expect , the family of solutions converge to a function of just one variable which satisfies a
elliptic equation in one dimension and the variable period appears explicitly in this equation. 
\end{remark}

\begin{proof}
We start by establishing  a priori estimates of $u^\eps$. In fact, taking $\varphi = u^\eps$ in the variational formulation (\ref{WF}), we obtain
\begin{equation} \label{priori1}
\begin{gathered}
\Big\| \frac{\partial u^\epsilon}{\partial x} \Big\|_{L^2(R^\epsilon)}^2
+ \Big\| \frac{\partial u^\epsilon}{\partial y} \Big\|_{L^2(R^\epsilon)}^2
+ \| u^\epsilon \|_{L^2(R^\epsilon)}^2
\le \| f^\eps \|_{L^2(R^\epsilon)} \| u^\epsilon \|_{L^2(R^\epsilon)}.
\end{gathered}
\end{equation}
Consequently, 
\begin{equation} \label{priori}
\begin{gathered}
\Big|\Big|\Big| \frac{\partial u^\epsilon}{\partial x} \Big|\Big|\Big|_{L^2(R^\epsilon)}^2
+ \Big|\Big|\Big| \frac{\partial u^\epsilon}{\partial y} \Big|\Big|\Big|_{L^2(R^\epsilon)}^2
+ ||| u^\epsilon |||_{L^2(R^\epsilon)}^2
\le ||| f^\eps |||_{L^2(R^\epsilon)} ||| u^\epsilon |||_{L^2(R^\epsilon)}.
\end{gathered}
\end{equation}
Taking into account that there exists $C >0$, independent of $\eps$, such that $ ||| f^\eps  |||_{L^2(R^\epsilon)} ||| \leqslant C$, we obtain
\begin{equation} \label{EST00}
\begin{gathered}
||| u^\epsilon |||_{L^2(R^\epsilon)}, \Big|\Big|\Big|\frac{\partial u^\epsilon}{\partial x} \Big|\Big|\Big|_{L^2(R^\epsilon)}
\textrm{ and }  \Big|\Big|\Big| \frac{\partial u^\epsilon}{\partial y}  \Big|\Big|\Big|_{L^2(R^\epsilon)} 
\le C \quad \forall \epsilon > 0.
\end{gathered}
\end{equation}

Therefore, the compactness Theorem \ref{convergence prop} implies that there exist $u \in H^1(0,1)$ and $u_1 \in L^2\Big((0,1); H^1_{\#}\big(Y^*(x)\big)\Big)$ such that
\begin{eqnarray}\label{main convergence}
\begin{array}{lll}
\displaystyle{\mathcal{T_\eps}(u^\eps)\rightharpoonup  \hat{u} = u \chi \quad \hbox{weakly in }\; L^2\big( (0,1)\times Y^*\big),}\\ \\
\displaystyle{\mathcal{T_\eps}\Big(\frac{\partial u^\eps}{\partial x}\Big)\rightharpoonup \xi_0 (x, y_1, y_2) = 
 \frac{\partial u}{\partial x}(x) +l(x)\frac{\partial u_1}{\partial y_1}(x, y_1, y_2) \quad  \hbox{weakly in }\; L^2(W),} \\ \\
\displaystyle{\mathcal{T_\eps}\Big(\frac{\partial u^\eps}{\partial y}\Big)\rightharpoonup \xi_1 (x, y_1, y_2)
= l(x)\frac{\partial u_1}{\partial y_2}(x, y_1, y_2)\quad  \quad \hbox{weakly in }\; L^2(W).}\\
\end{array}
\end{eqnarray}


We are now in the position of finding the homogenized equations satisfied by $u$ and $u_1$. Let us apply the unfolding operator to the original variational formulation (\ref{WF}). For $\phi \in H^1(0,1)$,
by the unfolding criterion for integrals (\ref{u.c.i}), we have
\begin{eqnarray*}
\int_{(0, 1) \times Y^*} \frac{1}{l([x]_\eps)}\left\{\mathcal{T_\eps}\Big(\frac{\partial u^\eps}{\partial x}\Big) \mathcal{T_\eps}(\frac{\partial \phi}{\partial x}) +  \mathcal{T_\eps}(u^\eps)\mathcal{T_\eps}(\phi) \right\}
dx dy_1 dy_2
=  \int_{(0, 1) \times Y^*}  \frac{1}{l([x]_\eps)}\mathcal{T_\eps}(f^\eps)  \mathcal{T_\eps(\phi)} dx dy_1 dy_2.
\end{eqnarray*}
Observe that in this last equality we have taken $\phi \in H^1 (0, 1)$ and the term including partial derivative with respect to $y$ does not appear.
By the convergences of $(\ref{main convergence})$ together with Proposition \ref{testconvergence} we can pass to the limit in the last equality and we obtain the first equation:
\begin{eqnarray} \label{first equation}
\int_W \left\{ \Big( \frac{1}{l(x)}\frac{\partial u}{\partial x}(x) + \frac{\partial u_1}{\partial y_1}(x, y_1, y_2)\Big)\frac{\partial \phi}{\partial x}(x)
+  \frac{u(x) \phi(x)}{l(x)} \right\} dxdy_1dy_2 =\nonumber\\
= \int_W \frac{\hat{f}(x) \phi(x)}{l(x)} dxdy_1dy_2, \quad \forall \phi \in H^1(0,1). 
\end{eqnarray}

We take now as a test function in (\ref{WF}) the function $v^\eps$ defined by:
$$v^\eps(x, y) = \eps \psi(x, \frac{x}{\eps}, \frac{y}{\eps} )\quad \forall (x, y) \in R^\eps$$ 
where $\psi \in C^\infty_\#(W)$.

It is obvious from the definition that $v^\eps \in H^1(R^\eps)$. Furthermore, it satisfies
$$ \frac{\partial v^\eps}{\partial x} = \eps \frac{\partial \psi}{\partial x} + \frac{\partial \psi}{\partial y_1},$$
$$ \frac{\partial v^\eps}{\partial y} = \frac{\partial \psi}{\partial y_2}.$$
Hence, using the properties of the unfolding operator we can show,
\begin{eqnarray}\label{limit1}
\begin{array}{lll}
\displaystyle{\mathcal{T_\eps}(v^\eps) \rightarrow 0 \quad \hbox{s-}L^2((0,1)\times Y^*),}\\ \\
\displaystyle{\mathcal{T_\eps}\Big(\frac{\partial v^\eps}{\partial x}\Big) \rightarrow  \frac{\partial \psi}{\partial y_1} \chi \quad \hbox{s-}L^2((0,1)\times Y^*),}\\ \\
\displaystyle{\mathcal{T_\eps}\Big(\frac{\partial v^\eps}{\partial y}\Big) \rightarrow  \frac{\partial \psi}{\partial y_2}\chi  \quad \hbox{s-}L^2((0,1)\times Y^*).}\\ \\
\end{array}
\end{eqnarray}
Due to the unfolding criterion for integrals, from the variational formulation (\ref{WF}) we obtain
\begin{eqnarray}\label{eq2}
\int_{(0, 1) \times Y^*} \frac{1}{l([x]_\eps)}\left\{\mathcal{T_\eps}\Big(\frac{\partial u^\eps}{\partial x}\Big) \mathcal{T_\eps}\Big(\frac{\partial v^\eps}{\partial x}\Big) + \mathcal{T_\eps}\Big(\frac{\partial u^\eps}{\partial y}\Big) \mathcal{T_\eps}\Big(\frac{\partial v^\eps}{\partial y}\Big)
+ \mathcal{T_\eps}(u^\eps)\mathcal{T_\eps}(v^\eps) \right\}
dx dy_1 dy_2 \nonumber \\
=  \int_{(0, 1) \times Y^*} \frac{1}{l([x]_\eps)}\mathcal{T_\eps}(f^\eps)  \mathcal{T_\eps}(v ^\eps) dx dy_1 dy_2.
\end{eqnarray}

Now using  the three statements in (\ref{limit1}) and (\ref{main convergence}), we pass to the limit in (\ref{eq2}) and we obtain the second equation:
\begin{eqnarray}\label{second equation}
&&\int_W  \Big( \frac{1}{l(x)}\frac{\partial u}{\partial x}(x) + \frac{\partial u_1}{\partial y_1}(x, y_1, y_2)\Big) \frac{\partial \psi}{\partial y_1}(x, y_1, y_2)\; dxdy_1dy_2 \nonumber \\
&& +\int_W \frac{\partial u_1}{\partial y_2}(x, y_1, y_2)\frac{\partial \psi}{\partial y_2}(x, y_1, y_2) \; dxdy_1dy_2= 0, \; \forall \psi \in C^\infty_\#(W).
\end{eqnarray}

By density, (\ref{second equation}) holds true for any function $\psi \in L^2\Big((0,1); H^1_{\#}\big(Y^*(x)\big)\Big)$. Therefore, by summing (\ref{first equation}) and (\ref{second equation}) we have the
homogenized system (\ref{system homogenized}).

To end the proof we will see the relation between the homogenized system  and the classical homogenized equation (\ref{homogenized problem}). This is achieved by using the solutions of the problem (\ref{AUXG}).
Treating $x$ as a parameter in ($\ref{second equation}$) is easy to check that it is a variational formulation associated to the following cell-problem:
\begin{equation} \label{AUXG1}
\left\{
\begin{gathered}
- \Delta u_1(x)  =  0  \textrm{ in } Y^*(x)  \\
\frac{\partial u_1(x)}{\partial N}  =  0  \textrm{ on } B_2(x)  \\
\frac{\partial  u_1(x)}{\partial N}  =  \frac{- N_1(x)}{l(x)}\frac{\partial u}{\partial x}(x)\textrm{ on } B_1(x)  \\
  u_1(x, \cdot, \cdot) \; l(x)-\hbox{periodic in the variable} \; y_1,\\
\end{gathered}
\right.
\end{equation}
where $N(x) = (N_1(x), N_2(x))$ is the unit outward normal to $\partial Y^*(x)$, $B_1(x)$ is the upper boundary and $B_2(x)$ is the lower
boundary of $\partial Y^*(x)$ for all $x \in I$.  
Thus, taking into account  $u$ is independent of $(y_1, y_2)$ one can see inmediatly that:
\begin{equation}\label{u_1}
u_1 (x, y_1, y_2) = - X(x) (y_1, y_2) 
\frac{1}{l(x)} \frac{\partial u}{\partial x}(x)\; (x, y_1, y_2) \in W,
\end{equation}
where $X(x)$ is the solution of (\ref{AUXG}).\\
Replacing $u_1$ by its value, (\ref{u_1}), in the equation (\ref{first equation}) we obtain the weak formulation of (\ref{homogenized problem}).

 The uniqueness and existence of weak solution of the problem (\ref{homogenized problem}) is an inmediate consequence of the Lax-Milgram theorem.

\end{proof}

\begin{remark}
If the non homogeneous term $f^\eps(x, y)$ is a fixed function depending only on the first variable, that is, $f^\eps(x, y)=f(x)$, it is easy to see that $f_0(x)=\frac{|Y^*(x)|}{l(x)}f(x)$ and therefore, \eqref{homogenized problem} can be written as  
$$
\left\{
\begin{gathered}
- \frac{l(x)}{|Y^*(x)|}\big(r(x) u_x\big)_x + u = f, \quad x \in (0,1) \\
 u'(0) = u' (1) = 0
\end{gathered}
\right.
$$
\end{remark}

\begin{remark}
Notice that in case $G_\eps$ presents a purely periodic behavior we recover the homogenized limit problem obtained in \cite{ArrCarPerSil}. On the other hand, in case the amplitude of the oscillation
depends on $x$ but the period is constant, $l(x) \equiv L$, we obtain the same homogenized limit problem as in \cite{ArrPer2011}.
\end{remark}

\section{Corrector result for the Neumann problem}\label{Sec-corrector}
In this section we address the question of correctors for problem (\ref{OP1}). To do that, we need the averaging operator $\mathcal{U}_\eps$, adapted to locally periodic thin domains, see \cite{CDG1} for the
definition for a purely periodic case.  In principle, this operator could be associated to  any ``admissible partition'' $\{x_k^\eps\}$ but since we will use it in connection with convergence properties of the solutions and will use the results from previous sections, we will consider it  is already associated to the $l(x)$-partition, see Definition \ref{partition}.  Hence, we define 
 \begin{definition}
 Let $\{x_k^\eps\}$ be the $l(x)$-partition. Then, if $\varphi \in L^p((0,1) \times Y^*)$, $ p \in [1, \infty]$, we set
 $$\mathcal{U_\eps}(\varphi) (x, y) = \frac{1}{l([x]_\eps)}\int_{0}^ {l([x]_\eps)}\varphi\Big([x]_\eps + \Gamma_{[x]_\eps} y_1, \frac{x - [x]_\eps}{\Gamma_{[x]_\eps}}, \frac{y}{\eps}\Big)\; dy_1, 
 \quad \forall (x, y) \in R^\eps.$$
 \end{definition}

 \begin{proposition} The main properties of $\mathcal{U}_\eps$ are the following:
 \begin{enumerate}
  \item[i)]  Assume $1\leq p, q \leq \infty$ and $\frac{1}{p} + \frac{1}{q} = 1$. The averaging operator $\mathcal{U}_\eps$ is the formal adjoint of the unfolding operator $\mathcal{T_\eps}$, in the sense that
  $$ \int_{(0, 1)\times Y^*} \frac{1}{l([x]_\eps)} \mathcal{T_\eps(\varphi)} \psi\;  dx dy_1 dy_2 = \frac{1}{\eps} \int_{R^\eps} \varphi \mathcal{U_\eps}(\psi) \; dxdy,\;  \forall \varphi \in L^q(R^\eps) \hbox{ and } \psi \in L^p((0,1) \times Y^*).$$
 \item[ii)] \label{u}Let $p$ belong to $[1, \infty]$. The averaging operator $\mathcal{U}_\eps$ is linear continuous from $L^p((0,1) \times Y^*)$ to $L^p(R^\eps)$ and there exists a constant $C>0$ independent of $\eps$ such that
 $$ |||\mathcal{U_\eps}(\varphi)|||_{L^p(R^\eps)} \leqslant C \|\varphi\|_{L^p\big( (0,1)\times Y^*\big)}, \quad \forall \varphi \in L^p((0,1) \times Y^*).$$ 
\item[iii)] $\mathcal{U_\eps}$ is the left inverse of $\mathcal{T_\eps}$, that is $(\mathcal{U_\eps} \circ \mathcal{T_\eps}) (\phi) = \phi$ for every $\phi \in L^p(R^\eps), 1\leq p \leq \infty. $
\item[iv)] Suppose that $p \in [1, \infty)$. Let $\varphi \in L^p(0, 1)$. Then, $|||\mathcal{U_\eps}(\varphi) - \varphi|||_{L^p(R^\eps)} \to 0$ when $\eps \to 0.$
\item[v)] Let $\{\varphi^\eps\}$ be a sequence in $L^p(R^\eps)$, $p \in [1, \infty)$, such that
$\mathcal{T_\eps(\varphi^\eps)}  \to \varphi  \hbox{ s}-L^p\big( (0,1)\times Y^*\big)$. Then 
$$|||\mathcal{U_\eps}(\varphi) - \varphi^\eps|||_{L^2(R^\eps)} \to 0.$$ 
  \end{enumerate}
  \end{proposition}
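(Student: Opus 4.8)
The plan is to reduce the claim to the strong convergence hypothesis by exploiting two facts already established for $\mathcal{U}_\eps$: that it is the left inverse of $\mathcal{T_\eps}$ (property iii) and that it is linear and uniformly bounded from $L^p\big((0,1)\times Y^*\big)$ to $L^p(R^\eps)$ (property ii). Since the hypothesis places $\{\varphi^\eps\}$ in $L^p(R^\eps)$ and asserts convergence of $\mathcal{T_\eps(\varphi^\eps)}$ in $L^p$, the natural conclusion is stated in the $|||\cdot|||_{L^p(R^\eps)}$ norm, the $L^2$ in the statement being the special case $p=2$.

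First I would apply property iii) to the function $\varphi^\eps \in L^p(R^\eps)$, which gives $\varphi^\eps = (\mathcal{U_\eps} \circ \mathcal{T_\eps})(\varphi^\eps) = \mathcal{U_\eps}\big(\mathcal{T_\eps(\varphi^\eps)}\big)$. Because $\varphi$ is the strong $L^p\big((0,1)\times Y^*\big)$ limit of $\mathcal{T_\eps(\varphi^\eps)}$, it belongs to $L^p\big((0,1)\times Y^*\big)$, so $\mathcal{U_\eps}(\varphi)$ is well defined. Using the linearity of $\mathcal{U_\eps}$ from property ii), the difference collapses into a single image under the averaging operator:
\begin{equation*}
\mathcal{U_\eps}(\varphi) - \varphi^\eps = \mathcal{U_\eps}(\varphi) - \mathcal{U_\eps}\big(\mathcal{T_\eps(\varphi^\eps)}\big) = \mathcal{U_\eps}\big(\varphi - \mathcal{T_\eps(\varphi^\eps)}\big).
\end{equation*}

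Next I would invoke the uniform bound from property ii) with argument $\varphi - \mathcal{T_\eps(\varphi^\eps)} \in L^p\big((0,1)\times Y^*\big)$, obtaining
\begin{equation*}
|||\mathcal{U_\eps}(\varphi) - \varphi^\eps|||_{L^p(R^\eps)} \leqslant C \, \|\varphi - \mathcal{T_\eps(\varphi^\eps)}\|_{L^p\big((0,1)\times Y^*\big)},
\end{equation*}
with $C$ independent of $\eps$. The right-hand side tends to $0$ precisely by the strong convergence hypothesis $\mathcal{T_\eps(\varphi^\eps)} \to \varphi$, which closes the argument.

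This is essentially a two-line consequence of properties ii) and iii), so there is no substantial obstacle to overcome. The only points deserving care are the bookkeeping that $\varphi - \mathcal{T_\eps(\varphi^\eps)}$ is a legitimate argument for $\mathcal{U_\eps}$ (it lies in $L^p\big((0,1)\times Y^*\big)$, being a difference of two such functions) and the observation that the constant $C$ in property ii) is independent of $\eps$, so that the final estimate is uniform and the passage to the limit is justified.
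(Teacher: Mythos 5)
Your argument for part v) is correct and is, line for line, the paper's own proof of that part: write $\varphi^\eps=\mathcal{U_\eps}\big(\mathcal{T_\eps}(\varphi^\eps)\big)$ by iii), use linearity to get $\mathcal{U_\eps}(\varphi)-\varphi^\eps=\mathcal{U_\eps}\big(\varphi-\mathcal{T_\eps}(\varphi^\eps)\big)$, and conclude with the uniform bound of ii). Your side remark that the $L^2$ norm in the statement of v) should read $L^p$ is also right; the paper's proof estimates in $L^p$. The problem is that the statement you were asked to prove is the whole proposition, parts i) through v), and you prove only v) while taking ii) and iii) as ``already established'' --- but those are precisely two of the claims to be proved, so as it stands your proposal is circular as a proof of the proposition and simply silent on i) and iv).

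What is missing is, in particular, the only substantive computation in the paper's proof, namely the adjointness identity i): one expands $\int_{(0,1)\times Y^*}\frac{1}{l([x]_\eps)}\mathcal{T_\eps}(\varphi)\,\psi$ as a sum over the intervals $(x_k^\eps,x_{k+1}^\eps)$ of the $l(x)$-partition and performs the changes of variables $x=x_k^\eps+\Gamma_{x_k^\eps}z$ (turning the macroscopic integration into an integration over $(0,l(x_k^\eps))$) and $y=\eps y_2$, which regroups the integrals exactly into $\frac{1}{\eps}\int_{R^\eps}\varphi\,\mathcal{U_\eps}(\psi)$. This matters for your argument because the paper derives ii) \emph{from} i) by duality together with the norm bounds $\|\mathcal{T_\eps}(\varphi)\|_{L^q((0,1)\times Y^*)}\leq l_1^{1/q}|||\varphi|||_{L^q(R^\eps)}$ of Proposition \ref{properties} iv); so the uniform constant $C$ you invoke is not free-standing. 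Part iii) needs the (short) verification that for $y_1\in\big(0,l([x]_\eps)\big)$ one has $\big[[x]_\eps+\Gamma_{[x]_\eps}y_1\big]_\eps=[x]_\eps$ and that $\frac{x-[x]_\eps}{\Gamma_{[x]_\eps}}\in\big(0,l([x]_\eps)\big)$, whence the integrand in the definition of $\mathcal{U_\eps}\big(\mathcal{T_\eps}(\phi)\big)(x,y)$ is constant in $y_1$ and equal to $\phi(x,y)$; and part iv) needs the density argument (the claim is clear for $\varphi\in\mathcal{D}(0,1)$, and extends to $L^p(0,1)$ using the uniform bound of ii)). Once i)--iv) are in place in this order, your two-line proof of v) closes the proposition exactly as in the paper.
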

 \begin{proof}
 \begin{enumerate}
 \item[i)] For every $\varphi \in L^q(R^\eps) $ and $\psi \in L^p((0,1) \times Y^*)$ we have
 \begin{eqnarray*}
\lefteqn{ \int_{(0, 1)\times Y^*} \frac{1}{l([x]_\eps)} \mathcal{T_\eps(\varphi)} \psi\;  dx dy_1 dy_2 =}\\
&=& \sum_{k=0}^{N_\eps}\int_{(x_{k}^\eps, x_{k+1}^\eps) \times Y^*} \frac{1}{l(x_{k}^\eps)}\widetilde{\varphi} \Big(x_{k}^\eps +\Gamma_{x_{k}^\eps} y_1, \eps y_2\Big)\chi_{(0,l(x_{k}^\eps))}(y_1)  \psi \;dx dy_1 dy_2 \\
 &=& \sum_{k=0}^{N_\eps}\int_{(0, l(x_{k}^\eps)) \times Y^*} \frac{1}{l(x_{k}^\eps)}\widetilde{\varphi} \Big(x_{k}^\eps +\Gamma_{x_{k}^\eps} y_1, \eps y_2\Big)\chi_{(0,l(x_{k}^\eps))}(y_1) \psi\Big(x_{k}^\eps +\Gamma_{x_{k}^\eps}z, y_1, y_2\Big) \Gamma_{x_{k}^\eps} \; dz dy_1 dy_2\\
& = & \sum_{k=0}^{N_\eps}\int_{(0, l(x_{k}^\eps)) \times (x_{k}^\eps, x_{k+1}^\eps) \times (0, G_1)} \frac{1}{l(x_{k}^\eps)} \widetilde{\varphi} (x, \eps y_2) \psi\Big(x_{k}^\eps +\Gamma_{x_{k}^\eps}z, 
 \frac{x- x_{k}^\eps}{ \Gamma_{x_{k}^\eps}}, y_2\Big)\; dz dx dy_2\\
 &=& \sum_{k=0}^{N_\eps}\frac{1}{\eps}\int_{(x_{k}^\eps, x_{k+1}^\eps) \times (0,\eps G_1)}  \widetilde{\varphi} (x, y)\Big(\frac{1}{l(x_{k}^\eps)}\int_{(0, l(x_{k}^\eps))} \psi\Big(x_{k}^\eps +\Gamma_{x_{k}^\eps}z, 
 \frac{x- x_{k}^\eps}{ \Gamma_{x_{k}^\eps}}, \frac{y}{\eps}\Big)\; dz\Big)\; dx dy\\
 &=& \frac{1}{\eps} \int_{R^\eps} \varphi \mathcal{U_\eps}(\psi) \; dxdy.
\end{eqnarray*}
\item[ii)] It is a immediate consequence of the duality above and of the property iv) in Proposition \ref{properties}.
\item[iii)] Simple consequence of the definition of the operator $\mathcal{U_\eps}$.
\item[iv)] The result is clear for any $\varphi \in \mathcal{D}(0, 1)$. By density, we obtain the convergence.
\item[v)] It is a direct consequence of the properties ii) and iii). Observe that
$$|||\mathcal{U_\eps}(\varphi) - \varphi^\eps|||_{L^p(R^\eps)} = |||\mathcal{U_\eps}\big(\varphi - \mathcal{T_\eps(\varphi^\eps)}\big)|||_{L^p(R^\eps)} 
\leqslant C \|\varphi - \mathcal{T_\eps(\varphi^\eps)}\|_{L^p\big( (0,1)\times Y^*\big)}.$$
\end{enumerate}
  \end{proof}

Finally, we give a general corrector result. We show convergence in $H^1$-norms if we add the first-order corrector to the original solutions $u^\eps$.
 \begin{theorem}\label{correctors}
Assume hypotheses of Theorem \ref{limit problem}  hold. Then,  
\begin{enumerate}
\item[i)] $\displaystyle{\lim_{\eps \to 0}|||u^\eps - u|||_{L^2(R^\eps)}=0.}$
\medskip
\item[ii)] $\displaystyle{\lim_{\eps \to 0}|| \mathcal{T_\eps}(\nabla u^\eps) - \big( \nabla u  + l(x)(\nabla_{y_1y_2}u_1)\big)\chi||_{\Big[L^2\big( (0,1)\times Y^*\big)\Big]^2}=0.}$
\medskip 
\item[iii)] $\displaystyle{\lim_{\eps \to 0}|||\nabla u^\eps - \nabla u - l(x)\mathcal{U_\eps}(\nabla_{y_1y_2}u_1)|||_{[L^2(R^\eps)]^2}=0.}$
\medskip
\item[iv)] $\displaystyle{\lim_{\eps \to 0}||| u^\eps - u + \eps \frac{\partial u}{\partial x}X(x, x/\eps, y / \eps)|||_{H^1(R^\eps)}=0.}$
\end{enumerate}
 \end{theorem}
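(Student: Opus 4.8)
The plan is to upgrade the four \emph{weak} convergences of Theorem \ref{limit problem} to \emph{strong} ones; once that is done, all four statements follow, and I would organise the argument around the convergence of energies. I would prove i) \emph{first and independently}, because it is precisely this fact that later allows the passage to the limit in the quadratic term $\tfrac1\eps\int_{R^\eps}f^\eps u^\eps$. Two ingredients drive i). A Poincar\'e inequality in the thin $y$-direction gives $|||u^\eps-\bar u^\eps|||_{L^2(R^\eps)}\le C\eps\,|||\tfrac{\partial u^\eps}{\partial y}|||_{L^2(R^\eps)}\to 0$, where $\bar u^\eps(x)$ is the vertical average, so $u^\eps$ is asymptotically a function of $x$ alone. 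The a priori bound \eqref{EST00} then shows $\bar u^\eps$ is bounded in $H^1(0,1)$ (the boundary contribution arising when one differentiates the average across the oscillating top is controlled exactly as the term in \eqref{boundary}), so by Rellich a subsequence converges strongly in $L^2(0,1)$, and its limit must be $u$ by \eqref{solu}. Hence $|||u^\eps-u|||_{L^2(R^\eps)}\to0$, which by Proposition \ref{properties} iv) and Proposition \ref{testconvergence} is equivalent to $\mathcal{T}_\eps(u^\eps)\to u\chi$ strongly.

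Next comes the energy step, which yields ii). Testing \eqref{WF} with $\varphi=u^\eps$, rescaling, and using the unfolding criterion for integrals (Proposition \ref{properties} iii)) gives
$$\mathcal{E}_\eps:=\Big|\Big|\Big|\frac{\partial u^\eps}{\partial x}\Big|\Big|\Big|_{L^2(R^\eps)}^2+\Big|\Big|\Big|\frac{\partial u^\eps}{\partial y}\Big|\Big|\Big|_{L^2(R^\eps)}^2+||| u^\eps|||_{L^2(R^\eps)}^2=\int_{(0,1)\times Y^*}\frac{1}{l([x]_\eps)}\mathcal{T}_\eps(f^\eps)\,\mathcal{T}_\eps(u^\eps)\,dxdy_1dy_2.$$
Since $\tfrac{1}{l([x]_\eps)}\to\tfrac1{l}$ boundedly, $\mathcal{T}_\eps(f^\eps)\rightharpoonup\hat f\chi$, and---by step i)---$\mathcal{T}_\eps(u^\eps)\to u\chi$ strongly, the right-hand side converges to $\int_W\frac{\hat f u}{l}$. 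On the other hand, writing $w_i^\eps=\tfrac{1}{\sqrt{l([x]_\eps)}}\mathcal{T}_\eps(\partial_i u^\eps)\rightharpoonup\tfrac{1}{\sqrt l}\xi_i\chi$ and using weak lower semicontinuity together with \eqref{dev1}--\eqref{dev2} gives $\liminf\mathcal{E}_\eps\ge\int_W\frac1l(\xi_0^2+\xi_1^2+u^2)$. The decisive algebraic check is that these two limits coincide: testing the homogenized system \eqref{system homogenized} with $\phi=u$ and with $\psi=l(x)u_1$ (the factor $l$ is exactly what absorbs the $l$ appearing in $\xi_0,\xi_1$) shows $\int_W\frac1l(\xi_0^2+\xi_1^2+u^2)=\int_W\frac{\hat f u}{l}$. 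Thus $\mathcal{E}_\eps$ converges to this common value; combined with the weak convergences and the uniform lower and upper bounds on the weight $\tfrac1{l([x]_\eps)}$, this forces $\|w_i^\eps\|\to\|w_i\|$ for each $i$, hence $\mathcal{T}_\eps(\partial_x u^\eps)\to\xi_0\chi$ and $\mathcal{T}_\eps(\partial_y u^\eps)\to\xi_1\chi$ strongly, which is exactly ii).

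Parts iii) and iv) are then corollaries. For iii) I would apply property v) of the averaging operator $\mathcal{U}_\eps$ to each component of $\nabla u^\eps$, using the strong unfolded limits from ii), and then simplify $\mathcal{U}_\eps\big((\nabla u+l\nabla_{y_1y_2}u_1)\chi\big)$ via the linearity of $\mathcal{U}_\eps$ and its property iv) (which returns functions of $x$ essentially unchanged), recovering $\nabla u+l(x)\mathcal{U}_\eps(\nabla_{y_1y_2}u_1)$ up to a strongly vanishing remainder. For iv) I would substitute the explicit representation $u_1=-\tfrac{1}{l(x)}\tfrac{\partial u}{\partial x}X$ from \eqref{u_1}: the $L^2$-part of the corrector carries a factor $\eps$ and is negligible by i), while in the gradient the chain rule produces the surviving terms $\tfrac{\partial u}{\partial x}\nabla_{y}X(x,x/\eps,y/\eps)$ (the $\partial_xX$ contributions being $O(\eps)$), which match $l(x)\mathcal{U}_\eps(\nabla_{y_1y_2}u_1)$ from iii) up to a term tending to zero.

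The main obstacle is the passage to the limit in $\tfrac1\eps\int_{R^\eps}f^\eps u^\eps$: under the sole hypothesis that $\mathcal{T}_\eps(f^\eps)$ converges \emph{weakly}, both factors converge only weakly and the limit cannot be taken naively. Everything hinges on first securing the strong $L^2$ convergence of $u^\eps$ in step i) through the thin-direction Poincar\'e inequality and the Rellich compactness of the averaged profile. A secondary but genuine subtlety is the correct handling of the space-dependent period, which is what makes $\psi=l(x)u_1$, rather than $u_1$, the right multiplier when identifying the limiting energy.
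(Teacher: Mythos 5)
Your proposal is correct and follows the paper's overall architecture --- first the strong $L^2$ limit of $u^\eps$, then convergence of energies plus a Radon--Riesz argument for the unfolded gradients, then the averaging operator $\mathcal{U}_\eps$ for iii) and a three-remainder decomposition for iv) --- but your implementation of the energy step genuinely differs in how the variable period is handled. The paper tests \eqref{WF} with $\varphi = u^\eps/l(x)$, which produces an extra cross term $\int_{R^\eps} u^\eps\,\partial_x u^\eps\, l'(x)/l(x)^2\,dxdy$ whose limit must be computed separately and then matched by testing \eqref{system homogenized} with $\phi = u/l(x)$, $\psi = u_1$. You instead test with $\varphi = u^\eps$ itself, let the weight $1/l([x]_\eps)$ enter only through the unfolding criterion for integrals, and absorb the period at the limit level by the choice $\psi = l(x)u_1$ (legitimate, since $l\in\mathcal{C}^1$ is bounded). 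Your algebra checks out: with $\phi=u$, $\psi=l(x)u_1$ the first term of \eqref{system homogenized} becomes $\int_W \xi_0^2/l$, the second $\int_W \xi_1^2/l$, so the identity $\int_W(\xi_0^2+\xi_1^2+u^2)/l = \int_W \hat f u/l$ holds and the $l'$ cross terms never appear --- a mild but real simplification. Both routes hinge on securing i) first, which your proposal correctly isolates as the crux: the right-hand side $\frac1\eps\int_{R^\eps} f^\eps u^\eps$ pairs two merely weakly convergent unfolded sequences, and only the strong convergence of $\mathcal{T}_\eps(u^\eps)$ rescues the limit.

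One caveat on your proof of i), which the paper disposes of in one line via the change of variables $(x,y)\mapsto(x,y/\eps)$ and extension operators. Your thin-direction Poincar\'e-plus-Rellich argument is the standard way to make this explicit, but the parenthetical claim that the boundary contribution from differentiating the vertical average is ``controlled exactly as the term in \eqref{boundary}'' is wrong as stated: that term vanished only because the test field there satisfied $\Psi\cdot n_{(y_1,y_2)}=0$ on the oscillating boundary, whereas differentiating the height $\eps G(x,x/\eps)$ produces $\eps G_x + G_y$, which is $O(1)$, not $O(\eps)$. The fix is easy: average over the \emph{fixed} substrip $(0,\eps G_0)\subset R^\eps$ (recall $G\geq G_0$), so that $(\bar u^\eps)'(x) = \frac{1}{\eps G_0}\int_0^{\eps G_0}\partial_x u^\eps\,dy$ involves no boundary term, the $H^1(0,1)$ bound follows from \eqref{EST00}, and the thin Poincar\'e inequality still gives $|||u^\eps - \bar u^\eps|||_{L^2(R^\eps)} \leq C\eps\,|||\partial_y u^\eps|||_{L^2(R^\eps)}$. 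With that repair, all four parts of your argument are sound.
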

 
 \begin{remark}
 Notice that the special feature of the first-order corrector function, $\eps \frac{\partial u}{\partial x}X(x, x/\eps, y / \eps)$, is that its dependence of $x$ involves the spatial changes of the basic
 cell $Y^*(x)$ through the solution $X(x)$ to the auxiliary problem.
 \end{remark}
 
 \begin{proof}
 \begin{enumerate}
 \item[i)] Using the change of variables $(x, y) \to (x, y/\eps)$ and extension operators, it is easy to prove this convergence.
  \item[ii)]  This convergence improve the convergences (\ref{dev1}) and (\ref{dev2}). It is based on the convergence of the energy.
  Taking $\varphi = \frac{u^\eps}{l(x)}$ in the variational formulation (\ref{WF}), we obtain
\begin{eqnarray*}
\int_{R^\eps} \Big( \frac{1}{l(x)} \Big({\frac{\partial u^\eps}{\partial x}}\Big)^2 - u_\eps \frac{\partial u^\eps}{\partial x} \frac{l'(x)}{l(x)^2} + \frac{1}{l(x)} \Big({\frac{\partial u^\eps}{\partial y}}\Big)^2\Big) \; dxdy = 
\int_{R^\eps} \Big(\frac{f^\eps u^\eps}{l(x)} - \frac{ ({u^\eps}) ^2}{l(x)}\Big) \; dxdy.
\end{eqnarray*}
Therefore, using the unfolding criterion for integrals and passing to the limit we get the following convergence
\begin{eqnarray}\label{conver1}
\lefteqn{\int_{(0, 1) \times Y^*} \frac{1}{l(x)^2}\left\{{\mathcal{T_\eps}\Big(\frac{\partial u^\eps}{\partial x}\Big)}^2 + {\mathcal{T_\eps}\Big(\frac{\partial u^\eps}{\partial y}\Big)}^2\right\}
dx dy_1 dy_2} \nonumber \\
&\stackrel{\eps\to 0}\longrightarrow & \int_{W} \left\{ \frac{\hat{f}u - u^2}{l(x)^2} + \frac{1}{l(x)^2} \Big( u \frac{\partial u}{\partial x} \frac{l'(x)}{l(x)} +  u \frac{\partial u_1}{\partial y_1} l'(x)\Big) \right\}dx dy_1 dy_2.
\end{eqnarray}
On the other hand, choosing $\phi = \frac{u}{l(x)}$ and $\psi = u_1$ as test functions in (\ref{system homogenized}) we get
\begin{eqnarray}\label{conver2}
\lefteqn{\int_W  \left\{ \Big(\frac{1}{l(x)}{\frac{\partial u}{\partial x}} + \frac{\partial u_1}{\partial y_1} \Big)^2 + {\frac{\partial u_1}{\partial y_2}}^2 \right\} dxdy_1dy_2}\nonumber \\
&= & \int_{W} \left\{ \frac{\hat{f}u - u^2}{l(x)^2} + \frac{1}{l(x)^2}\Big( u \frac{\partial u}{\partial x} \frac{l'(x)}{l(x)} +  u \frac{\partial u_1}{\partial y_1} l'(x)\Big) \right\} dx dy_1 dy_2.
\end{eqnarray}
Then, combining (\ref{conver1}) and (\ref{conver2}) we have
\begin{eqnarray}\label{conver3}
\lefteqn{\int_{(0, 1) \times Y^*} \frac{1}{l(x)^2}\left\{{\mathcal{T_\eps}\Big(\frac{\partial u^\eps}{\partial x}\Big)}^2 + {\mathcal{T_\eps}\Big(\frac{\partial u^\eps}{\partial y}\Big)}^2\right\}
dx dy_1 dy_2} \nonumber \\
&\stackrel{\eps\to 0}\longrightarrow & \int_W \left\{  \Big(\frac{1}{l(x)}{\frac{\partial u}{\partial x}} + \frac{\partial u_1}{\partial y_1} \Big)^2 + {\frac{\partial u_1}{\partial y_2}}^2\right\} dxdy_1dy_2.
\end{eqnarray}
Consequently, due to the weak convergences (\ref{dev1}), (\ref{dev2}) and the convergence (\ref{conver3}) we can ensure by the Radon-Riez property the strong convergence \textit{ii)}.

 \item[iii)] This convergence follows from strong convergence ii) and using the property ii) and ii) of $\mathcal{U_\eps}$ in Proposition \ref{u}.
 \item[iv)] Due to the regularity of the functions $G(\cdot, \cdot)$ and $l(\cdot)$ we can ensure that the function $X$ belongs, at least, to $H^1\Big((0,1); C^1_{\#}\big({Y^*(x)}\big)\Big)$, to prove that $X$ has at least one derivative with respect to $x$ techniques of perturbation of the domains can be used,  see \cite[Proposition A.1]{ArrPer2011}, \cite[Proposition A.2]{MR} and \cite[Chapter 2]{He} where more general results can be found. Then, the function $X(x, x/\eps, y / \eps)$ is well-defined function in $H^1(R^\eps)$ and we can obtain some estimates in $R^\eps$. It is easy to see that
  \begin{eqnarray}\label{bound1} 
&& ||| X(x, x/\eps, y / \eps)||||^2_{L^2(R^\eps)} = \frac{1}{\eps}\int_{R^\eps} |X(x, x/\eps, y / \eps)|^2 \; dxdy  \nonumber\\
&& = \int_{(0,1)}\int_{(0, G(x, x/ \eps))} |X(x, x/\eps, z)|^2 \; dxdz \leq   C \int_{(0,1)} \sup_{(y_1, y_2) \in Y^*(x)} |X(x, y_1, y_2)|^2 \; dx\nonumber\\
&& = C || X||^2_{L^2\big((0,1); C^1_{\#}\big({Y^*(x)}\big) \big)}.
  \end{eqnarray}
 In order to simplify the notation we consider the following functions,
 $$X_{x}^{\eps}(x,y) = \frac{\partial X}{\partial x} \big(x, \frac{x}{\eps}, \frac{y}{ \eps}\big), \; X_{1}^{\eps}(x,y) = \frac{\partial X}{\partial y_1} \big(x, \frac{x}{\eps}, \frac{y}{ \eps}\big) \hbox{ and } X_{2}^{\eps}(x,y) = \frac{\partial X}{\partial y_2} \big(x, \frac{x}{\eps}, \frac{y}{ \eps}\big), \; \forall (x,y) \in R^\eps.$$ 
  Analogously to (\ref{bound1}), we can get
 \begin{eqnarray}\label{bound2}
 ||| X_{x}^{\eps}(x,y)||||^2_{L^2(R^\eps)} \leq C || \partial_{x}X||_{L^2\Big((0,1); C^1_{\#}\big({Y^*(x)}\big) \Big)}.
 \end{eqnarray}

  By the definition of the norm $|||\cdot|||_{H^1( R^\eps)}$ we have
 \begin{eqnarray}\label{corrector}
\lefteqn{ ||| u^\eps - u + \eps \frac{\partial u}{\partial x}X(x, x/\eps, y / \eps)|||^2_{H^1(R^\eps)} =  ||| u^\eps - u + \eps \frac{\partial u}{\partial x}X(x, x/\eps, y / \eps)|||^2_{L^2(R^\eps)}} \nonumber\\
 &+ & ||| \frac{\partial u^\eps}{\partial x} - \frac{\partial u}{\partial x} + \eps \frac{\partial u}{\partial x}X_{x}^{\eps}(x,y)  + \frac{\partial u}{\partial x}X_{1}^{\eps}(x,y)
 + \eps \frac{\partial^2 u}{\partial x^2}X(x, x/\eps, y / \eps)|||^2_{L^2(R^\eps)} \nonumber \\
& + & ||| \frac{\partial u^\eps}{\partial y} + \frac{\partial u}{\partial x}X_{2}^{\eps}(x,y)|||^2_{L^2(R^\eps)}.
  \end{eqnarray}
 
 Now, we calculate the limit for each term. For the first term we have the following inequality:
  
$$||| u^\eps - u + \eps \frac{\partial u}{\partial x}X(x, x/\eps, y / \eps)|||_{L^2(R^\eps)} \leqslant  ||| u^\eps - u|||_{L^2(R^\eps)} + |||\eps \frac{\partial u}{\partial x}X(x, x/\eps, y / \eps)|||_{L^2(R^\eps)}.$$ 
 
Therefore,  due to convergence i) and bound (\ref{bound1})  we get
\begin{equation}\label{term1}
||| u^\eps - u + \eps \frac{\partial u}{\partial x}X(x, x/\eps, y / \eps)|||_{L^2(R^\eps)} \to 0.
\end{equation}
 
 For the second term,  adding and substracting the appropiate functions and with the triangular inequality we obtain
 $$||| \frac{\partial u^\eps}{\partial x} - \frac{\partial u}{\partial x} + \eps \frac{\partial u}{\partial x}X_{x}^{\eps}(x,y)  + \frac{\partial u}{\partial x}X_{1}^{\eps}(x,y)
 + \eps \frac{\partial^2 u}{\partial x^2}X(x, x/\eps, y / \eps)|||_{L^2(R^\eps)}  \leq I_1 + I_2 + I_3,$$
 where
 \begin{eqnarray*}
 &&I_1 =  ||| \frac{\partial u^\eps}{\partial x} - \frac{\partial u}{\partial x} - l(x)\mathcal{U_\eps}(\frac{\partial u_1}{\partial y_1})|||_{L^2(R^\eps)},\\
 &&I_2 =  ||| l(x)\mathcal{U_\eps}(\frac{\partial u_1}{\partial y_1}) +\frac{\partial u}{\partial x}X_{1}^{\eps}(x,y)|||_{L^2(R^\eps)},\\
 &&I_3 = |||\eps \frac{\partial u}{\partial x}X_{x}^{\eps}(x,y) + \eps \frac{\partial^2 u}{\partial x^2}X(x, x/\eps, y / \eps)|||_{L^2(R^\eps)}.\\
 \end{eqnarray*}
 
 It follows from convergene iii)  that 
  $$I_1= ||| \frac{\partial u^\eps}{\partial x} - \frac{\partial u}{\partial x} - l(x)\mathcal{U_\eps}(\frac{\partial u_1}{\partial y_1})|||_{L^2(R^\eps)} \to 0, \; \hbox{as} \; \eps \to 0. $$
 Using Proposition \ref{convergence test} we have
$$ \mathcal{T_\eps}(X_{1}^{\eps}(x,y)) \longrightarrow \partial_{y_1}X \chi \quad \hbox{s}-L^2\big( (0,1)\times Y^*\big).$$
Consequently, from property v) in Proposition \ref{u} we obtain
$$ |||X_{1}^{\eps}(x,y)- \mathcal{U_\eps}(\partial{ y_1} X) |||_{L^2(R^\eps)} \to 0, \; \hbox{as} \; \eps \to 0.$$
  Therefore,  from (\ref{u_1}) we can conclude that
  $$I_2 =  ||| l(x)\mathcal{U_\eps}(\frac{\partial u_1}{\partial y_1}) +\frac{\partial u}{\partial x}X_{1}^{\eps}(x,y)|||_{L^2(R^\eps)}\to 0, \; \hbox{as} \; \eps \to 0. $$
  
  Moreover, by (\ref{bound1}) and (\ref{bound2}) we have 
  $$I_3 = |||\eps \frac{\partial u}{\partial x}X_{x}^{\eps}(x,y) + \eps \frac{\partial^2 u}{\partial x^2}X(x, x/\eps, y / \eps)|||_{L^2(R^\eps)} \to 0, \; \hbox{as} \; \eps \to 0.$$
  
  Then, we have proved that 
  \begin{equation}\label{term2}
  ||| \frac{\partial u^\eps}{\partial x} - \frac{\partial u}{\partial x} + \eps \frac{\partial u}{\partial x}X_{x}^{\eps}(x,y)  + \frac{\partial u}{\partial x}X_{1}^{\eps}(x,y)
 + \eps \frac{\partial^2 u}{\partial x^2}X(x, x/\eps, y / \eps)|||_{L^2(R^\eps)} \to 0.
 \end{equation}
 
 Finally, arguing as for the second term, tacking into account that $u$ does not depend on $y$, we obtain 
 \begin{equation}\label{term3}
 ||| \frac{\partial u^\eps}{\partial y} + \frac{\partial u}{\partial x}X_{2}^{\eps}(x,y)|||_{L^2(R^\eps)}\to 0.
 \end{equation}
 
 Therefore, in light of (\ref{corrector}), (\ref{term1}), (\ref{term2}) and (\ref{term3}) we prove iv).
  \end{enumerate}
 \end{proof}

\end{document}